\newtheorem{thm}{Theorem}
\newtheorem{prop}[thm]{Proposition}
\newtheorem{lemma}[thm]{Lemma}
\newtheorem{obs}[thm]{Observation}
\def\X{X}
\def\Y{Y}
\def\Z{Z}
\definecolor{roug}{RGB}{0,0,0} 
\tikzstyle mygrid=[line width=0.5,color=black!15]
\tikzstyle mygrid2=[line width=1,color=black!20]
\tikzstyle{gridnode}=[shape=circle,fill=black,draw=black,minimum size=0.5pt,inner sep=0.5pt]
\tikzstyle{patternus}=[draw=roug,thick,rectangle,inner sep=7pt,fill=roug!15, semitransparent]
\tikzstyle{patternusbenef}=[draw=roug,fill=roug!05,semitransparent]
\tikzstyle{labelnode}=[circle,fill=white, inner sep =2,minimum size=0 pt]
\tikzstyle{labelnode2}=[circle,fill=white, inner sep =2,minimum size=0 pt,anchor=mid]
\tikzstyle{code}=[shape=circle,fill=black,draw=black,minimum size=0.5pt,inner sep=2pt]
\def\framegrid{
\begin{scope}
\clip (-2.9,-2.9) rectangle (1.9,1.9);
\draw[mygrid] (-2.9,-2.9) grid (1.9,1.9);
\draw[rotate = 45,mygrid] (-4.9,-4.9) grid [xstep=1.414,ystep =1.414] (3.9,3.9);
\draw[shift = {(1,0)}, rotate = 45, mygrid] (-4.9,-4.9) grid [xstep=1.414,ystep =1.414] (3.9,3.9);
\foreach \I in {-2,...,1}\foreach \J in {-2,...,1}
	{\node[gridnode](\I\J) at (\I,\J) {};}
\end{scope}
}
\begin{document}

\title{An improved lower bound for $(1,\leq 2)$-identifying codes in the king grid\thanks{This research is
    supported by the ANR project IDEA, under contract {ANR-08-EMER-007},
    2009-2011.}}
\author{
Florent Foucaud\thanks{LaBRI, Universit\'e de Bordeaux, 351 cours de la Lib\'eration, 33405
Talence cedex, France, e-mail:foucaud@labri.fr}
\and
Tero Laihonen\thanks{Department of Mathematics, University of Turku,
20014 Turku, Finland, e-mail: terolai@utu.fi}
\and
Aline Parreau\thanks{Institut Fourier (Universit{\'e} Joseph Fourier,
CNRS), St.\ Martin d'H{\`e}res, France, e-mail: aline.parreau@ujf-grenoble.fr}}

\maketitle

\begin{abstract}
 We call a subset $C$ of vertices of a graph $G$ a $(1,\leq \ell)$-identifying code if for all subsets $X$ of vertices  with size at most $\ell$, the sets $\{c\in C |\exists u \in X, d(u,c)\leq 1\}$ are distinct. The concept of identifying codes was introduced in 1998 by Karpovsky, Chakrabarty and Levitin. Identifying codes have been studied in various grids. In particular, it has been shown that there exists a $(1,\leq 2)$-identifying code in the king grid with density 
$\frac{3}{7}$ and that there are no such identifying codes with density smaller than $\frac{5}{12}$. 
Using a suitable frame and a discharging procedure, we improve the lower bound by showing that any $(1,\leq 2)$-identifying code of the king grid has density at least $\frac{47}{111}$.
\end{abstract}
{\bf Keywords:} Identifying codes, king grid, density, discharging method

\section{Introduction}

Let $G=(V,E)$ be a simple undirected graph with vertex set $V$ and
edge set $E$. The closed neighbourhood of a vertex $v\in V$, which
consists of the vertex itself and all the adjacent vertices, is
denoted by $N[v]$. In addition, we write $N[X]=\bigcup_{v\in X}
N[v]$ for $X\subseteq V$. In \cite{hon2}, a subset $C\subseteq V$ is
called a $(1,\leq \ell)$-\emph{identifying code} if
$$N[X]\cap C \neq
N[Y]\cap C$$ for any two distinct subsets $X\subseteq V$ and
$Y\subseteq V$ of size at most $\ell$ (one of them can be the empty
set). The elements of a code $C$ are called \emph{codewords}.

Identifying codes were introduced by Karpovsky, Chakrabarty and
Levitin in \cite{kar1} and can be applied to locate objects
in sensor networks \cite{ray}. A network is modelled by a graph and
a sensor can check its closed neighbourhood. It gives an alarm if it
detects at least one of the sought objects there. Suppose we have a
$(1,\le \ell)$-identifying code $C$ in the graph, and we place the
sensors to the vertices corresponding to the codewords of $C$. Then,
knowing the set of alarming sensors $A\subseteq C$, we can determine
where the objects are (assuming that there are at most $\ell$ of
them). Indeed, we have $A=N[X]\cap C$ for some subset $X$ of size at
most $\ell$, and because these are unique, we can determine $X$
--- the set of vertices where the objects are.

Of course, we would like to use as few sensors as possible, so our
aim is to find identifying codes with smallest possible cardinality.
For infinite grids, we define below the measure \emph{density} for
this purpose.

Identifying codes have been considered, for example, in the
following infinite grids: the square grid, the triangular grid, the
king grid and the hexagonal mesh \cite{lits,char1,coh0,coh3,king,
cranston,hon3,hon4,hon5}. For more papers concerning the topic of
identification, see \cite{lobwww}. In this paper, we focus on
$(1,\le 2)$-identifying codes in the king grid. The king grid has
vertex set $\mathbb{Z}^2$ and two vertices are adjacent if and only if
the Euclidean distance between them is at most $\sqrt{2}.$ Hence,
the closed neighbourhood of a vertex consists of nine vertices (see Figure \ref{fig:king}).

\begin{figure}[h]
\begin{center}
\begin{tikzpicture}[scale=0.6]

\clip (-3.7,-2.7) rectangle (6.7,3.7);

\draw[mygrid2] (-6.9,-5.9) grid (6.9,5.9);
\draw[rotate = 45,mygrid2] (-15.9,-15.9) grid [xstep=1.414,ystep =1.414] (15.9,15.9);
\draw[shift = {(1,0)}, rotate = 45, mygrid2] (-10.9,-10.9) grid [xstep=1.414,ystep =1.414] (10.9,10.9);

\foreach \I in {-6,...,6}\foreach \J in {-5,...,5}
	{\node[gridnode](\I\J) at (\I,\J) {};}

\foreach \J in {-5,...,5}
\foreach \I in {-13,-11,-9,-6,-4,-2,1,3,5}
{\node[code] at (\I+\J+5,\J) {};}

\draw (-2.5,-1.5) rectangle (4.5,-0.5);
\draw[dashed] (-1.5,-0.5) rectangle (5.5,0.5);
\end{tikzpicture}

\end{center}
\caption{\label{fig:king} The infinite king grid with a $(1,\le 2)$-identifying code of density $\frac{3}{7}$.}
\end{figure}
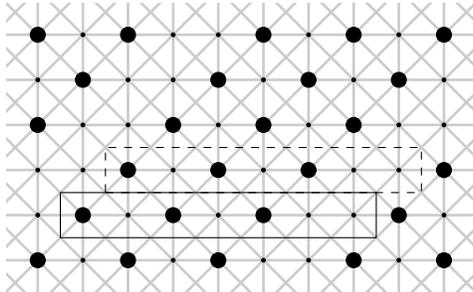

In \cite{hon2}, a construction of a $(1,\le 2)$-identifying code in
the infinite king grid with density $\tfrac{3}{7}$ is given (see Figure \ref{fig:king}). Recently, in
\cite{mikko}, a lower bound of $\tfrac{5}{12}$ was obtained. In this paper,
we show that our approach which utilizes certain {\em frames}, improves
this lower bound to $\frac{47}{111}$.

\section{Preliminaries}

We call {\em code} any subset of $\mathbb{Z}^2$.
As usual, the {\em density} $D(C)$ of a code $C\subseteq \mathbb{Z}^2$  is defined by:
$$D(C)=\limsup_{n\to \infty} \frac{|C\cap Q_n|}{|Q_n|}$$
where $Q_n=\{(x,y)\in \mathbb{Z}^2\mid |x|\le n, |y|\le n\}$.

The next theorem gives the previously known bounds for the optimal density of a $(1,\leq 2)$-identifying code in the king grid.

\begin{thm}[\cite{king,mikko}]\label{thm:previous}
Let $d$ be the optimal density of a $(1,\leq 2)$-identifying code in the king grid. Then $0.41\overline{6}=\tfrac{5}{12}\leq d\leq \tfrac{3}{7}=0.4286...$.
\end{thm}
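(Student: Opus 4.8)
The statement splits into two independent bounds, which I would handle separately. Throughout write $I(v)=N[v]\cap C$ and, for a set $X$, $I(X)=N[X]\cap C$.

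For the upper bound $d\le\tfrac37$, the plan is to exhibit the explicit periodic code $C$ drawn in Figure~\ref{fig:king} and to verify directly that it is $(1,\le 2)$-identifying. I would use the standard reformulation that $C$ is $(1,\le 2)$-identifying if and only if $I(X)\ne I(Y)$ for every pair of distinct sets $X,Y$ with $|X|,|Y|\le 2$, equivalently that some codeword lies in the symmetric difference $I(X)\triangle I(Y)$. The essential simplification is locality: if $I(X)=I(Y)$ with $X,Y$ nonempty, then this common set is contained in $N[X]$, a bounded union of nine-vertex neighbourhoods, so $X$ and $Y$ must both lie within a bounded window around its support. Combined with the fact that $C$ is invariant under a sublattice of $\mathbb{Z}^2$ of finite index, this reduces the infinitely many cases to a finite check over one fundamental domain, which I would carry out by hand or by a short computation. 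A convenient first step is to confirm domination, i.e.\ that every $N[v]$ meets $C$, which already disposes of $X=\emptyset$ against any nonempty $Y$.

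For the lower bound $d\ge\tfrac{5}{12}$ I would run a counting (share) argument. Assign to each codeword $c$ its \emph{share}
\[
s(c)=\sum_{v\,:\,c\in I(v)}\frac{1}{|I(v)|}.
\]
Summing over a large square $Q_n$ and exchanging the order of summation gives $\sum_{c\in C\cap Q_n}s(c)\approx|Q_n|$ up to a lower-order boundary term, so $1/D(C)$ equals the average share; hence it suffices to prove the local bound $s(c)\le\tfrac{12}{5}$ for every codeword. Since $s(c)$ runs over the nine vertices of $N[c]$, the inputs are the identification constraints translated into statements about signature sizes: domination forces $|I(v)|\ge 1$ everywhere, and the separation of singletons forces that at most one vertex $v\in N[c]$ has $|I(v)|=1$, because such a $v$ must satisfy $I(v)=\{c\}$, and $I(u)=I(v)=\{c\}$ for $u\ne v$ would leave $\{u\}$ and $\{v\}$ unseparated. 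The $\ell=2$ conditions give the sharper restrictions on how many vertices in $N[c]$ may have $|I(v)|$ as small as $2$, and it is exactly these that push the bound from the trivial value down to $\tfrac{12}{5}$.

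I expect the lower bound to be the main obstacle, and within it the delicate part is the local case analysis establishing $s(c)\le\tfrac{12}{5}$. One must enumerate the possible placements of codewords lying within distance two of $c$, read off the resulting sizes $|I(v)|$ for the nine vertices $v\in N[c]$, and check that any configuration making several of these signatures small simultaneously violates some separation of a pair of size at most two. Because cheapening the share and respecting all pairwise separations pull in opposite directions, the argument is a balancing act; handling these interactions cleanly, rather than by brute enumeration, is where the care lies, and it is precisely the place where the present paper's frame-and-discharging refinement later sharpens the constant from $\tfrac{5}{12}$ to $\tfrac{47}{111}$.
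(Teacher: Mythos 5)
Your plan for the upper bound is fine, and it is essentially all that is required for that half: the paper itself only exhibits the code of Figure~\ref{fig:king} and cites \cite{hon2,king} for its verification, and your locality-plus-periodicity reduction to a finite check is a legitimate way to carry out that verification. The genuine gap is in the lower bound: the key local lemma you propose --- that \emph{every} codeword $c$ of \emph{every} $(1,\leq 2)$-identifying code satisfies $s(c)\leq\tfrac{12}{5}$ --- is false, and it is refuted by the very code of Figure~\ref{fig:king}. That code consists of the diagonals $x-y\equiv 1,3,6 \pmod 7$, and it is a valid $(1,\leq 2)$-identifying code (one checks the three conditions of Theorem~\ref{thm:equiv} directly). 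Writing $I(v)=N[v]\cap C$ and $d=x-y \bmod 7$, the value $|I(v)|$ depends only on $d$ and equals $4,5,4,4,3,3,4$ for $d=0,1,\dots,6$ respectively. A codeword $c$ on a diagonal with $d=6$ has in $N[c]$ one vertex with $d=4$, two with $d=5$, three with $d=6$, two with $d=0$ and one with $d=1$, hence
\[
s(c)=1\cdot\tfrac{1}{3}+2\cdot\tfrac{1}{3}+3\cdot\tfrac{1}{4}+2\cdot\tfrac{1}{4}+1\cdot\tfrac{1}{5}=\tfrac{49}{20}=2.45>\tfrac{12}{5}.
\]

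This is not a repairable detail but a structural obstruction: since a valid code attains share $\tfrac{49}{20}$, any pointwise share bound $S$ that is actually true must satisfy $S\geq\tfrac{49}{20}$, and the resulting density bound $1/S\leq\tfrac{20}{49}\approx 0.408$ falls short of $\tfrac{5}{12}\approx 0.417$. So any share-type proof of the theorem must amortize, i.e.\ show that codewords of large share are compensated by nearby codewords of small share. That averaging step is precisely what the paper's own proof supplies, in a different currency: instead of per-codeword shares it works with $12$-vertex frames, proves that every frame contains at least four codewords and that $4$-frames have all codewords in corners (Lemma~\ref{lem:fourcorners}), that each $4$-frame forces $6^+$-frames at the four translates $\pm(2,0),\pm(0,2)$ (Lemma~\ref{lem:four}), and then discharges charge $\tfrac14$ from each such $6^+$-frame so that every frame ends with charge at least $5$, giving density at least $\tfrac{5}{12}$ via Proposition~\ref{prop:dens} (Theorem~\ref{thm:mikko}). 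A secondary inaccuracy in your outline: by Theorem~\ref{thm:equiv}, every vertex of the king grid satisfies $|I(v)|\geq 3$ (each of the three rows of the $3\times 3$ square $N[v]$ is a horizontal triple), so your discussion of vertices with $|I(v)|\in\{1,2\}$ is vacuous here; the entire difficulty is bounding how many vertices can have $|I(v)|=3$, and, as the computation above shows, no purely local count around a single codeword can settle that.
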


To prove bounds on the density of a code $C$, one method is to use a finite subset
$X$ of $\mathbb{Z}^2$. For any vertex $v\in \mathbb{Z}^2$, we denote by $v+X$ or $X+v$ the set $\{v+u\ |\ u\in X\}$, and by $n(X,C)$ the average number of codewords of $C$ in the translations of $X$: $n(X,C)=\limsup_{n\to \infty} \frac{\sum_{v\in Q_n} |(v+X)\cap C|}{|Q_n|}$. If we have some knowledge about $n(X,C)$, we can get results on the density of $C$ with the following proposition.

\begin{prop}\label{prop:dens}
Let $C$ be a code of $\mathbb{Z}^2$ and $X$ a nonempty finite subset of $\mathbb{Z}^2$, then:
$$D(C)=\frac{n(X,C)}{|X|}.$$
\end{prop}

\begin{proof}
Let $h$ be an integer such that $X\subseteq Q_h$.
Let $n$ be a positive integer. 
A vertex of $(v+X)\cap C$, for $v\in Q_{n+h}$ is either in $C\cap Q_n$, or in $Q_{n+2h}\setminus Q_n$. Among all the sets $(v+X)\cap C$, with $v\in Q_{n+h}$, each vertex of $C\cap Q_n$ is counted exactly $|X|$ times and each vertex of  $Q_{n+2h}\setminus Q_n$ is counted at most $|X|$ times. Hence we have:

$$|X||C\cap Q_n|\leq \sum_{v\in Q_{n+h}} |(v+X)\cap C|\leq |X|(|C\cap Q_n| + |Q_{n+2h}\setminus Q_n|).$$

By dividing every term by $|Q_n|$, we obtain:
$$|X|\cdot\frac{|C\cap Q_n|}{|Q_n|}\leq \frac{\sum_{v\in Q_{n+h}} |(v+X)\cap C|}{|Q_n|} \leq |X|\cdot\frac{|C\cap Q_n|}{|Q_n|}+|X|\cdot\frac{|Q_{n+2h}\setminus Q_n|}{|Q_n|}.$$

Since $h$ is a fixed integer, we have: $\limsup\limits_{n\to \infty} \frac{|Q_{n+2h}\setminus Q_n|}{|Q_n|} =0$. As $n\to \infty$ we obtain:

$$\limsup_{n\to \infty} \frac{\sum_{v\in Q_{n+h}} |(v+X)\cap C|}{|Q_n|}= |X|\cdot D(C).$$

To end the proof, one can easily verify that $\limsup\limits_{n\to \infty} \frac{\sum_{v\in Q_{n+h}} |(v+X)\cap C|}{|Q_n|}= n(X,C)$. 


\end{proof}

The identifying code problem can be seen as a covering problem. Indeed, a code $C$ is a $(1,\leq \ell)$-identifying code if and only if, for any two distinct subsets of vertices, $X$ and $Y$, of size at most $\ell$, the symmetric difference $N[X]\Delta N[Y]$ is covered by at least one element of $C$. 
The following theorem, from \cite{king}, reduces (in the case of the king grid) the family of sets that a code has to cover to be a $(1,\leq 2)$- identifying code.

\begin{thm}[\cite{king}, Theorem 2.1]\label{thm:equiv}
A code $C$ is a $(1,\leq 2)$-identifying code in the king grid if
and only if, for all $(x,y)\in \mathbb{Z}^2$, the sets
\begin{enumerate}
\item\label{set4} $\{(x,y), (x+3,y), (x,y+3), (x+3,y+3)\}$,
\item\label{sethori} $\{(x,y), (x+1,y), (x+2,y)\}$
\item\label{setvert} $\{(x,y),(x,y+1),(x,y+2)\}$
\end{enumerate}
each contain at least one codeword of $C$.
\end{thm}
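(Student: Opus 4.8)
The plan is to reduce the statement to a purely geometric claim and then verify that claim by a finite case analysis. Recall the characterization noted just before the theorem: $C$ is a $(1,\le 2)$-identifying code if and only if, for every pair of distinct subsets $X,Y\subseteq\mathbb{Z}^2$ with $|X|,|Y|\le 2$, the symmetric difference $N[X]\Delta N[Y]$ contains a codeword. Hence it suffices to prove that a code meets every such symmetric difference if and only if it meets every translate of the three listed sets. Throughout I would exploit the invariance of the king grid under all translations and under the eight symmetries of the square, which lets me normalise the relative positions of the vertices of $X$ and $Y$ and thereby keep the number of cases finite.

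For the \emph{necessity} direction I would show that each of the three families is itself realised as a symmetric difference $N[X]\Delta N[Y]$, so that any identifying code must meet it. For the horizontal triple one can take $X=\{(x+1,y+2),(x+1,y+1)\}$ and $Y=\{(x+1,y+2)\}$: the two closed neighbourhoods agree except on the single row $\{(x,y),(x+1,y),(x+2,y)\}$. The vertical triple follows by a $90^\circ$ rotation. For the four-corner set one can take the two diagonals of a small square, $X=\{(x+1,y+1),(x+2,y+2)\}$ and $Y=\{(x+1,y+2),(x+2,y+1)\}$; a direct check shows that $N[X]$ and $N[Y]$ coincide everywhere except at the four vertices $(x,y),(x+3,y),(x,y+3),(x+3,y+3)$. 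Since $C$ meets every symmetric difference, it meets each of these sets.

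The substance of the theorem is the \emph{sufficiency} direction, which amounts to the geometric lemma that every nonempty set $N[X]\Delta N[Y]$ contains a translate of one of the three listed sets; granting this, if $C$ meets all three families then it meets every symmetric difference, and so is identifying. I would prove the lemma by a case analysis organised by the pair $(|X|,|Y|)$ and, within each pair, by the displacement(s) between the vertices involved, reduced using the symmetries above. The cases where one set is empty or both are singletons are immediate: the symmetric difference then contains a full $3\times 3$ block, or is the symmetric difference of two such blocks, and one checks directly that in every relative position it contains a line of three consecutive vertices. The cases $(1,2)$ and $(2,2)$ are where the neighbourhoods of $Y$ can nearly cover those of $X$, shrinking the symmetric difference, and here one must verify that every minimal residual configuration still contains a horizontal triple, a vertical triple, or the four-corner set.

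I expect the disjoint-support $(2,2)$ case to be the main obstacle. There the eight neighbourhoods of the four vertices can be arranged so that almost everything cancels, and the extremal arrangement is exactly the diagonal cross used in the necessity direction, whose symmetric difference is the four isolated corners and nothing more. The key observation driving the whole analysis is that whenever the two unions $N[X]$ and $N[Y]$ overlap in all but a thin boundary strip, that strip necessarily contains three consecutive collinear vertices, so a line triple appears; the corner set is forced only in the single degenerate configuration where the strips collapse to isolated points. Verifying this for every displacement up to symmetry is routine but lengthy bookkeeping, and it is the part I would write out most carefully.
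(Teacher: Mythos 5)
First, a point of reference: this paper does not prove Theorem~\ref{thm:equiv} at all --- it imports it as Theorem~2.1 of~\cite{king}, so your proposal must be judged against the original argument there, which indeed has the architecture you describe: reduce identification to covering the symmetric differences $N[X]\Delta N[Y]$, exhibit each of the three sets as such a symmetric difference (necessity), and show by case analysis that every symmetric difference of two distinct sets of size at most $2$ contains a translate of one of the three sets (sufficiency). Your necessity direction is complete and correct: $N[\{(x+1,y+1),(x+1,y+2)\}]\Delta N[\{(x+1,y+2)\}]$ is exactly the horizontal triple $\{(x,y),(x+1,y),(x+2,y)\}$, and for the diagonal cross $X=\{(x+1,y+1),(x+2,y+2)\}$, $Y=\{(x+1,y+2),(x+2,y+1)\}$ the symmetric difference is exactly the four corners; both witnesses check out.

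The gap is in the sufficiency direction, and that direction is the whole theorem. You state the key geometric lemma but do not prove it: the $(1,2)$ and $(2,2)$ cases, which you yourself identify as ``the main obstacle,'' are deferred to ``routine but lengthy bookkeeping.'' That bookkeeping is precisely the content of Theorem~2.1 of~\cite{king}; an outline of how one would organize it is not a proof, and nothing in your text rules out, for instance, a $(2,2)$ configuration whose symmetric difference is some other thin four-point set containing neither a collinear triple nor the corner pattern. There is also a misstatement that matters logically: you quantify the lemma over \emph{nonempty} symmetric differences, but sufficiency needs it for \emph{all distinct} pairs $X,Y$ with $|X|,|Y|\le 2$ --- in particular it must establish that such a symmetric difference is never empty (equivalently, that distinct sets of size at most two always have distinct closed neighbourhoods in the king grid). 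If some symmetric difference were empty, a code could satisfy all three conditions and still fail to be identifying, so ``granting this, \dots $C$ \dots is identifying'' does not follow from the lemma as you state it. Both defects are repairable --- the case analysis, once actually executed, proves nonemptiness along the way --- but as written the proposal is an accurate plan of the known proof rather than a proof.
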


Note that Theorem~\ref{thm:equiv} together with Proposition~\ref{prop:dens} directly implies that the density of a $(1,\leq 2)$-identifying code of the king grid is at least $\frac{1}{3}$.
To improve this lower bound (and the one of Theorem~\ref{thm:previous}), we will use, as the set $X$ of Proposition~\ref{prop:dens}, the set of Figure \ref{fig:not} (without loss of generality, one can set the left-lower corner of this set to be the origin). We call {\em frame} any set isomorphic to the set of Figure~\ref{fig:not}. The aim of this paper is to give a lower bound on the average number of codewords in a frame. More precisely, we will use a discharging procedure to show that the average number of codewords in a frame is at least $5+\frac{3}{37}$. 
Hence, we will need to consider the {\em neighbourhood} of a frame. There is a natural bijection between all the frames and the set $\mathbb{Z}^2$. We will also consider the {\em frames lattice}  over the set of all frames, where the distance between two frames will be the distance in the king grid between the two corresponding vertices. As an example, the {\em $2$-ball} of a frame $F$ is the set of frames $$\{F+(x,y) \ | \ (x,y)\in \mathbb{Z}^2, \max\{|x|,|y|\} \leq 2\}.$$

We will often study a frame together with all the vertices at distance at most two of some vertex of the frame. Therefore, we will use the notation {\bf Xy} with {\bf X}$\in${\bf A-H} and {\bf y}$\in${\bf a-h}, for a vertex on line {\bf X} and row {\bf y} with respect to the coordinates of Figure \ref{fig:not}.
The four vertices in positions  {\bf Cc},  {\bf Cf},  {\bf Fc}  and {\bf Ff} are called the {\em corners} of the frame. A {\em side} of a frame is a set of four vertices of the frame lying in the same line or column. 

The conditions of Theorem~\ref{thm:equiv} can be reformulated using our terminology.
A code $C$ is a $(1,\leq 2)$-identifying code in the king grid, if
and only if for each frame $F$:

\vspace{0.4cm}

\noindent{\bf Condition 1:} At least one corner of $F$ is a codeword of $C$.

\vspace{0.2cm}

\noindent{\bf Condition 2:} Each set of three consecutive vertices on a side of $F$contains at least one codeword of $C$.

\vspace{0.4cm}

\begin{figure}[ht]
\begin{center}
\begin{tikzpicture}[scale=0.5]
\begin{scope}
\clip (-4.7,-4.7) rectangle (3.7,3.7);
\draw[mygrid] (-4.9,-4.9) grid (3.9,3.9);
\draw[rotate = 45,mygrid] (-15.9,-15.9) grid [xstep=1.414,ystep =1.414] (15.9,15.9);
\draw[shift = {(1,0)}, rotate = 45, mygrid] (-10.9,-10.9) grid [xstep=1.414,ystep =1.414] (10.9,10.9);
\foreach \I in {-4,...,3}\foreach \J in {-4,...,3}
	{\node[gridnode](\I\J) at (\I,\J) {};}
\end{scope}

\foreach \I / \A in {3/A, 2/B, 1/C, 0/D, -1/E, -2/F, -3/G, -4/H}
	{\node[labelnode] at (-5,\I) {\A};}
\foreach \I / \A in {3/a, 2/b, 1/c, 0/d, -1/e, -2/f, -3/g, -4/h}
	{\node[labelnode2] at (-1-\I,4) {\A};}
	
\begin{scope}[even odd rule]
\filldraw[patternus] (-2.5,-2.5) rectangle (1.5,1.5)
 (-1.5,-1.5) rectangle (0.5,0.5);
\end{scope}
\end{tikzpicture}
\end{center}
\caption{\label{fig:not} Notation for vertices in the neighbourhood of a frame.}
\end{figure}
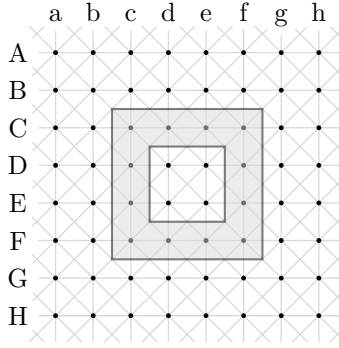

In the following, we assume that $C$ is a $(1,\leq 2)$-identifying code of the king grid.
Given an integer $k$, a {\em $k$-frame} (resp. {\em $k^+$-frame}) is a frame containing exactly $k$ (resp. at least $k$) vertices of $C$.

\begin{lemma}\label{lem:fourcorners}
Let  $F$ be a frame, then $F$ is a  $4^+$-frame. If $F$ is a $4$-frame, then all its codewords are in the corners of $F$.
\end{lemma}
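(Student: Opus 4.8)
The plan is to work entirely with the reformulation in terms of Conditions~1 and~2, treating the frame as the $12$-vertex boundary ring whose four corners are \textbf{Cc}, \textbf{Cf}, \textbf{Fc}, \textbf{Ff} and whose remaining eight vertices are the two \emph{middle} vertices of each side (\textbf{Cd},\textbf{Ce} on top; \textbf{Df},\textbf{Ef} on the right; \textbf{Fd},\textbf{Fe} on the bottom; \textbf{Dc},\textbf{Ec} on the left).

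For the statement that $F$ is a $4^+$-frame, I would exhibit four pairwise disjoint consecutive side-triples that partition the twelve vertices, namely \{\textbf{Cc},\textbf{Cd},\textbf{Ce}\}, \{\textbf{Cf},\textbf{Df},\textbf{Ef}\}, \{\textbf{Fd},\textbf{Fe},\textbf{Ff}\} and \{\textbf{Dc},\textbf{Ec},\textbf{Fc}\}: each side contributes one of its two triples, chosen so that each corner is used exactly once. By Condition~2 every one of these triples contains a codeword, and since they are disjoint this immediately forces at least four codewords. The key observation for the equality part is that the eight side-triples of Condition~2 split into \emph{two} such partitions: the partition $\mathcal P_1$ just described, and the complementary partition $\mathcal P_2$ consisting of \{\textbf{Cd},\textbf{Ce},\textbf{Cf}\}, \{\textbf{Df},\textbf{Ef},\textbf{Ff}\}, \{\textbf{Fc},\textbf{Fd},\textbf{Fe}\}, \{\textbf{Cc},\textbf{Dc},\textbf{Ec}\}.

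Now suppose $F$ is a $4$-frame. Because $\mathcal P_1$ (resp. $\mathcal P_2$) partitions the twelve vertices into four triples each needing a codeword, each triple of $\mathcal P_1$ and each triple of $\mathcal P_2$ contains exactly one of the four codewords. From here I would argue that the four codewords must be either all four corners or one middle vertex from every side, and then eliminate the latter using Condition~1.

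The real work — and the step I expect to be the main obstacle — is ruling out this all-middles configuration; I would do it by a short propagation argument combined with the dihedral symmetry of the frame. Assuming some codeword is a middle vertex, by symmetry we may place it on the top side, so it is \textbf{Cd} or \textbf{Ce}. Both of these lie in the $\mathcal P_1$-triple \{\textbf{Cc},\textbf{Cd},\textbf{Ce}\} and in the $\mathcal P_2$-triple \{\textbf{Cd},\textbf{Ce},\textbf{Cf}\}, each of which carries a single codeword, so the corners \textbf{Cc} and \textbf{Cf} are excluded. Then the side-triple \{\textbf{Cc},\textbf{Dc},\textbf{Ec}\} still needs a codeword, which must be \textbf{Dc} or \textbf{Ec}; as this middle vertex is the unique codeword of the $\mathcal P_1$-triple \{\textbf{Dc},\textbf{Ec},\textbf{Fc}\}, the corner \textbf{Fc} is excluded too, and symmetrically \{\textbf{Cf},\textbf{Df},\textbf{Ef}\} forces \textbf{Ff} out. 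With no corner a codeword, Condition~1 fails, a contradiction, so all four codewords are corners. Verifying that the symmetry reduction is legitimate and that the propagation closes up consistently around all four sides is the only delicate point; everything else is bookkeeping on the twelve labelled vertices.
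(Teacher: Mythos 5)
Your proposal is correct, and its first half (the lower bound of four codewords) is exactly the paper's argument: partition the twelve frame vertices into four disjoint side-triples and apply Condition~2 to each. For the equality case your route is genuinely different in organization. The paper starts from Condition~1: it places a corner codeword, say at \textbf{Cc}, and then forces the remaining three codewords into the corners one at a time, each step ruled by an ad hoc re-partition of the leftover vertices into three disjoint side-triples (so that a middle codeword on line \textbf{C} would force a fifth codeword). You instead observe that the eight side-triples form \emph{two} complementary partitions $\mathcal P_1$, $\mathcal P_2$, so in a $4$-frame every triple of either partition carries exactly one codeword; a single middle codeword then propagates around the ring to exclude all four corners, contradicting Condition~1 only at the very end. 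Both proofs are sound and of comparable length; your version is more systematic (no case-by-case choice of triples --- the exactly-one-codeword-per-triple bookkeeping does all the work), while the paper's is more direct and constructive about where the codewords sit. The symmetry reduction you flag as the delicate point is unproblematic: the dihedral group of the frame permutes the sides transitively, rotations preserve each of $\mathcal P_1$ and $\mathcal P_2$, and reflections merely swap them, so the hypotheses you use (one codeword per triple of either partition, plus Conditions~1 and~2) are invariant and the ``without loss of generality'' placement of the middle codeword on the top side is legitimate.
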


\begin{proof}
It is easily observed that any frame $F$ can be partitioned into four sets of three consecutive vertices on a line or column. Hence, by Condition 2, $F$ contains at least four codewords.

Assume that $F$ contains exactly four codewords. By Condition 1, one of the codewords must be in a corner, say in position {\bf Cc}. Then, by Condition 2, there must be another codeword on line {\bf C}. If it was on column {\bf d} or {\bf e}, one could partition the remaining vertices into three sets of three consecutive vertices on a line or column and $F$ would be a $5^+$-frame, a contradiction. So there is a codeword in position {\bf Cf}. Using similar arguments, the two other codewords must be in positions {\bf Fc} and {\bf Ff}.
\end{proof}

\begin{obs}\label{obs:NC}
If a frame has three non-corner codewords on two of its sides, then it is a $6^+$-frame.
\end{obs}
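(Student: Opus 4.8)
The plan is to pin down the three given non-corner codewords on exactly two sides of $F$---necessarily in a $2{+}1$ pattern, since each side carries only two non-corner vertices---and then to show that the two remaining sides, which I shall call the \emph{free sides}, are forced to contain enough additional codewords. The decisive bookkeeping remark is that every non-corner vertex of $F$ lies on a \emph{unique} side (the inner columns $\mathbf{d},\mathbf{e}$ and inner rows $\mathbf{D},\mathbf{E}$ are not sides). Hence the three given codewords lie on the two chosen sides and on neither free side, so any codeword that \textbf{Condition 2} forces onto a free side, as well as any corner codeword demanded by \textbf{Condition 1}, is automatically distinct from the three given ones. It therefore suffices to prove that the constraints on the free sides, together with \textbf{Condition 1}, force at least three further codewords, giving $F$ at least $3+3=6$ codewords in total.

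To make this precise, let $T_1$ and $T_2$ be the two free sides. Each consists of four consecutive vertices and thus splits into two overlapping triples, so \textbf{Condition 2} requires both triples of $T_1$ and both triples of $T_2$ to be covered, while \textbf{Condition 1} requires in addition that some corner of $F$ be a codeword. I will show that any set of codewords meeting these demands has size at least three. The central elementary fact I will repeatedly use is that a single codeword covers both triples of a side only if it occupies one of the two \emph{central} (non-corner) positions of that side, since the endpoints are corners, each lying in only one of the two triples.

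The argument then splits according to the relative position of the two chosen sides. If they are opposite, then $T_1$ and $T_2$ are opposite as well, hence vertex-disjoint; covering both triples of each free side costs at least one codeword per side, and two codewords suffice for \textbf{Condition 2} only when each is a central (non-corner) vertex of its side. In that case no corner is selected and \textbf{Condition 1} fails, so a third codeword is unavoidable. If the chosen sides are adjacent, then $T_1$ and $T_2$ are adjacent as well and share a single corner; here one verifies that the four triples to be covered admit a cover of size two \emph{only} by the two central positions, the key point being that no pair consisting of a corner and one further vertex meets all four triples. Consequently every two-codeword solution is again corner-free and fails \textbf{Condition 1}, forcing a third codeword.

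I expect the adjacent case to be the only delicate step: there one must check, by a short inspection of the three corners lying on the two free sides, that each of them covers only a limited pair of the four triples and cannot be completed to a full cover by any single additional vertex (the two uncovered triples being disjoint). Using the off-free-side corner for \textbf{Condition 1} is even more wasteful, since it contributes nothing to covering $T_1\cup T_2$ and still leaves at least two codewords needed there. Once this inspection is done, both cases yield at least three codewords disjoint from the three given ones, and the bound of six follows at once.
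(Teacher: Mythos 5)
Your proof is correct and takes essentially the same route as the paper's: in both the opposite-sides and adjacent-sides cases you count the codewords that Condition 2 forces onto the two sides disjoint from the three given non-corner codewords, and then use Condition 1 to force a sixth codeword (either a third codeword on those two sides, or the remaining corner in the adjacent case). The only cosmetic difference is that you package the counting as ``every two-codeword cover of the free sides' triples is corner-free,'' whereas the paper cases directly on where the corner codeword of Condition 1 lies; the underlying bookkeeping is identical.
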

\begin{proof}
Assume first that the three non-corner codewords lie on opposite sides. Without loss of generality, we can assume there are in positions {\bf Cd}, {\bf Ce} and {\bf Fd}.
By Condition 1, there must be one codeword in a corner. Assume it is in column  {\bf c}. Then, by Condition 2,  there must be another codeword in column {\bf c} and one in column {\bf f}, leading to a $6^+$-frame. If the corner codeword is in column {\bf f}, then there would be two codewords in column {\bf f} and one in column {\bf c}, leading again to a $6^+$-frame.

We assume now that the three non-corner codewords are on adjacent sides. Without loss of generality, we can assume they are not in the set $S$ composed by line  {\bf C} and column  {\bf f}. By Condition 2, there are at least two codewords in $S$. By Condition 1, there is a codeword in a corner. If there is a codeword in position {\bf Fc} then $F$ is a $6^+$-frame. Otherwise, there is a corner codeword in $S$ and then there must be three codewords in $S$, leading to a $6^+$-frame.
\end{proof}

\begin{obs}\label{obs:CO}
If a frame $F$ has a corner codeword $c$ and two non-corner codewords on one of the sides of $F$ that $c$ does not belong to, then $F$ is a $6^+$-frame.
\end{obs}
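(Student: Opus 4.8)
The plan is to normalise the configuration using the frame's symmetries, turn Condition 2 into a short list of covering requirements, and finish with a disjointness count that forces three new codewords. The frame carries the full dihedral symmetry of a square, which acts transitively on its four corners, so without loss of generality the corner codeword $c$ lies in position \textbf{Cc}. The only two sides not containing \textbf{Cc} are line \textbf{F} and column \textbf{f}, and the reflection across the main diagonal fixes \textbf{Cc} (and \textbf{Ff}) while exchanging these two sides; hence I may also assume the side from the hypothesis is line \textbf{F}. Since every side has exactly two non-corner vertices, the hypothesis forces codewords at \textbf{Fd} and \textbf{Fe}, so I start from the three codewords \textbf{Cc}, \textbf{Fd}, \textbf{Fe}.

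Next I would apply Condition 2 to each side and discard the triples already hit by these three codewords. What remains is: on line \textbf{C}, the requirement \{\textbf{Cd}, \textbf{Ce}, \textbf{Cf}\}; on column \textbf{c}, the requirement \{\textbf{Dc}, \textbf{Ec}, \textbf{Fc}\}; and on column \textbf{f}, the two requirements \{\textbf{Cf}, \textbf{Df}, \textbf{Ef}\} and \{\textbf{Df}, \textbf{Ef}, \textbf{Ff}\}. Each of these four sets must contain a codeword, while the remaining triples on line \textbf{F}, on line \textbf{C}, and on column \textbf{c} are already satisfied by \textbf{Cc}, \textbf{Fd}, \textbf{Fe}.

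The heart of the argument is a disjointness bookkeeping. The set \{\textbf{Dc}, \textbf{Ec}, \textbf{Fc}\} is disjoint from the union of the other three requirements, so it must contribute a codeword of its own. Moreover \{\textbf{Cd}, \textbf{Ce}, \textbf{Cf}\} and \{\textbf{Df}, \textbf{Ef}, \textbf{Ff}\} are disjoint, so they force two further, pairwise distinct codewords. As none of the positions involved equals \textbf{Cc}, \textbf{Fd} or \textbf{Fe}, we obtain at least $3+3=6$ codewords, so $F$ is a $6^+$-frame.

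The step I expect to be the main obstacle is the case discipline rather than any computation: I must ensure the symmetry reduction is fully justified (every corner is equivalent to \textbf{Cc}, and the two missing sides really are interchangeable) and, above all, that the three covering sets used in the count are genuinely pairwise disjoint and disjoint from the three given codewords, so that no single codeword can serve two requirements and thereby undercut the bound. Once this disjointness is verified, the inequality $6=3+3$ is immediate.
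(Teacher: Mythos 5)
Your proof is correct and takes essentially the same route as the paper's: both reduce by symmetry to codewords at \textbf{Cc}, \textbf{Fd}, \textbf{Fe}, then use Condition 2 to force one codeword in each of the three pairwise disjoint sets $\{\textbf{Dc},\textbf{Ec},\textbf{Fc}\}$, $\{\textbf{Cd},\textbf{Ce},\textbf{Cf}\}$ and $\{\textbf{Df},\textbf{Ef},\textbf{Ff}\}$, yielding at least six codewords. The only difference is presentational: you spell out the symmetry reduction and the disjointness bookkeeping that the paper leaves implicit.
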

\begin{proof}
Without loss of generality, we assume that there are three codewords in positions {\bf Cc}, {\bf Fd} and {\bf Fe}. By Condition 2, there must be another codeword in column {\bf c}, another codeword in line {\bf C} and one among positions {\bf Df}, {\bf Ef} and {\bf Ff}. Hence, $F$ is a $6^+$-frame.
\end{proof}

\begin{lemma}\label{lem:four}
Let $F$ be a $4$-frame. Then the four frames $F+\{(0,2), (0,-2), (2,0), (-2,0)\}$ are $6^+$-frames.
\end{lemma}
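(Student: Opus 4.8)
The plan is to reduce to a single direction by symmetry, read off the rigid structure of a $4$-frame from Lemma~\ref{lem:fourcorners}, and then force the missing codewords not from the shifted frame itself but from a frame translated by a single unit.

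I would first place $F$ so that its corners sit at $(0,0),(3,0),(0,3),(3,3)$. By Lemma~\ref{lem:fourcorners} these corners are codewords and the eight remaining vertices of $F$ are non-codewords; in particular $(0,1),(0,2),(3,1),(3,2)\notin C$ while $(0,3),(3,3)\in C$. Because the codeword pattern of a $4$-frame is invariant under the dihedral symmetries of the square, it suffices to treat one translate, say $F'=F+(0,2)$, the remaining three following from the same argument composed with the appropriate symmetry of $F$. The frame $F'$ has corners $(0,2),(3,2),(0,5),(3,5)$; its two bottom corners $(0,2),(3,2)$ are non-codewords, and $(0,3),(3,3)\in C$ appear as non-corner codewords on its left and right sides.

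The decisive step is to bring in the intermediate frame $F+(0,1)$, whose corners are $(0,1),(3,1),(0,4),(3,4)$. Its two lower corners $(0,1),(3,1)$ are non-corner vertices of $F$, hence non-codewords, so Condition~1 applied to this frame forces a codeword at $(0,4)$ or at $(3,4)$; both positions are non-corner vertices of $F'$ (on its left and right sides, respectively). In either case $F'$ then carries three non-corner codewords spread over its two opposite vertical sides, namely $\{(0,3),(0,4),(3,3)\}$ or $\{(0,3),(3,3),(3,4)\}$, and Observation~\ref{obs:NC} immediately yields that $F'$ is a $6^+$-frame.

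I expect the main obstacle to be recognizing that $F'$ cannot be handled in isolation. Its own Conditions~1 and~2 only guarantee five codewords --- one can place exactly five codewords on $F'$ (for instance at $(0,3),(3,3),(0,5),(1,2),(2,5)$) in a way that satisfies both conditions --- so the sixth codeword must come from a neighbouring frame. The point is to choose the correct neighbour: the one-unit translate $F+(0,1)$ is selected precisely so that two of its corners fall on the forced non-codewords on the sides of $F$, which is what makes Condition~1 productive and supplies the third non-corner codeword needed to apply Observation~\ref{obs:NC}.
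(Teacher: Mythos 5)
Your proof is correct and takes essentially the same route as the paper: fix the corners via Lemma~\ref{lem:fourcorners}, apply Condition~1 to the one-unit translate (you use $F+(0,1)$, the paper uses $F+(1,0)$) to force a codeword at a non-corner position of the two-unit translate, then conclude with Observation~\ref{obs:NC} and symmetry. Your closing remark about why the shifted frame cannot be handled in isolation is accurate but not needed for the argument.
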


\begin{proof}
By Lemma~\ref{lem:fourcorners}, the four codewords of $F$ are its four corners.
By Condition $1$ applied on $F+(1,0)$, there is a codeword on either position {\bf Cg} or position {\bf Fg}. 
Hence by Observation~\ref{obs:NC}, $F+(2,0)$ is a $6^+$-frame.
The claim is obtained by symmetry.
\end{proof}

Lemma~\ref{lem:four} gives a short proof of a result first given in \cite{mikko}. The proof is a warm-up for our main result.

\begin{thm}[\cite{mikko}]\label{thm:mikko}
The density of any $(1,\leq 2)$-identifying code of the king grind is at least $\tfrac{5}{12}$.
\end{thm}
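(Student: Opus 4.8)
The plan is to apply Proposition~\ref{prop:dens} with $X$ equal to a frame. Since a frame consists of $12$ vertices, we have $|X|=12$, so it suffices to show that $n(X,C)\geq 5$, which yields $D(C)=\frac{n(X,C)}{12}\geq\frac{5}{12}$. Recalling that $|(v+X)\cap C|$ is precisely the number of codewords in the frame $F+v$, the quantity $n(X,C)$ is exactly the limsup of the average number of codewords per frame over the frames lattice. Thus I will assign to each frame $F$ a \emph{charge} equal to its number of codewords, and argue that after a suitable discharging the average charge per frame is at least $5$.

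By Lemma~\ref{lem:fourcorners}, every frame carries charge at least $4$, so the only frames below the target $5$ are the $4$-frames, each with a deficit of $1$. By Lemma~\ref{lem:four}, the four frames obtained from a $4$-frame by translating by $(0,\pm 2)$ and $(\pm 2,0)$ are all $6^+$-frames, each carrying a surplus of at least $1$. This suggests the following rule: each $4$-frame receives $\frac14$ of charge from each of its four axis neighbours at lattice-distance $2$ (the translates by $(0,\pm2)$ and $(\pm2,0)$), all of which are $6^+$-frames by Lemma~\ref{lem:four}. Equivalently, each $6^+$-frame sends $\frac14$ to every such axis neighbour that happens to be a $4$-frame.

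It then remains to check the resulting charges. A $4$-frame ends with $4+4\cdot\frac14=5$, and a $5$-frame is untouched and keeps charge $5$. For a $6^+$-frame $G$, the only frames to which it can send charge are the four translates $G+(0,\pm2)$ and $G+(\pm2,0)$; hence $G$ sends at most $4\cdot\frac14=1$ unit in total and ends with charge at least $6-1=5$. So every frame ends with charge at least $5$, and crucially no $6^+$-frame is ever over-discharged below the target.

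Finally I will note that discharging conserves total charge: every quarter-unit sent is received, so for any finite region the original and the discharged totals differ only by charge crossing its boundary. Since all transfers occur between frames at lattice-distance $2$, the discrepancy between $\sum_{v\in Q_n}|(v+X)\cap C|$ and the discharged sum is $O(n)$, negligible compared to $|Q_n|$. Taking the limsup of the averages therefore gives $n(X,C)\geq 5$, whence $D(C)\geq\frac{5}{12}$. The main delicate point is this boundary bookkeeping, together with the observation that a $6^+$-frame has at most four axis neighbours at distance $2$, which is exactly what keeps the discharging balanced.
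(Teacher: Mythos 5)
Your proposal is correct and follows essentially the same route as the paper: the identical discharging rule (each $6^+$-frame sends $\tfrac14$ to each $4$-frame among its four axis-translates at distance~$2$), justified by Lemma~\ref{lem:fourcorners} and Lemma~\ref{lem:four}, combined with Proposition~\ref{prop:dens}. Your explicit boundary bookkeeping merely spells out what the paper compresses into the remark that charge only moves between frames at distance at most~$2$.
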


\begin{proof}
Let $C$ be a $(1,\leq 2)$-identifying code of the king grid. We use the discharging method to show that the average number of codewords in a frame is at least $5$. Since the size of each frame is $12$, the result will follow by Proposition~\ref{prop:dens}.

In the beginning, each $k$-frame has charge $k$. Then, each $6^+$-frame $F$ gives charge $\tfrac{1}{4}$ to each $4$-frame among frames $F+\{(0,2), (0,-2), (2,0), (-2,0)\}$. By Lemma~\ref{lem:fourcorners}, there exist only $4^+$-frames. By Lemma~\ref{lem:four}, each $4$-frame receives charge $1$ and each $6^+$-frame gives at most charge $1$ away. So after the discharging process, each frame has at least charge $5$. Each frame only gives charge to vertices at distance at most 2, hence the average number of codewords of $C$ in a frame is at least $5$ and we are done.
\end{proof}

In the following, we will prove our main result:

\begin{thm}\label{thm:main}
The density of any $(1,\leq 2)$-identifying code of the king grid is at least $\tfrac{47}{111}=0.\overline{423}$.
\end{thm}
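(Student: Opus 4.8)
The plan is to push the discharging argument of Theorem~\ref{thm:mikko} further: I will show that the average number of codewords per frame is at least $\tfrac{188}{37}=5+\tfrac{3}{37}$, which by Proposition~\ref{prop:dens} (each frame has $12$ vertices) gives density at least $\tfrac{188/37}{12}=\tfrac{47}{111}$. As in Theorem~\ref{thm:mikko}, each $k$-frame starts with charge $k$ and charge is moved only between frames at distance at most $2$ in the frame lattice, so the limsup average is preserved; it then suffices to redistribute charge so that every frame ends with charge at least $\tfrac{188}{37}$.

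The obstruction is that in the proof of Theorem~\ref{thm:mikko} both $5$-frames and $6$-frames can end with charge exactly $5$: a $5$-frame neither sends nor receives, and a $6$-frame can be drained to $5$ by four $4$-frame neighbours. Writing the target as a deficit/surplus, a $4$-frame must now gain $\tfrac{40}{37}$ and a $5$-frame must gain $\tfrac{3}{37}$, while a $6$-frame has surplus only $\tfrac{34}{37}$ and a $7$-frame has surplus $\tfrac{71}{37}$. Thus $7^+$-frames can comfortably act as donors, whereas a $6$-frame cannot afford to feed four $4$-frame neighbours at the naive rate $\tfrac14\cdot\tfrac{40}{37}=\tfrac{10}{37}$ each, and every deficient frame must be shown to have enough rich frames inside its $2$-ball.

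First I would strengthen the local analysis, extending Lemma~\ref{lem:four} and Observations~\ref{obs:NC} and~\ref{obs:CO}. For a $4$-frame (whose codewords are its four corners by Lemma~\ref{lem:fourcorners}) I would analyse the full $2$-ball, not just the four cardinal distance-$2$ frames, in order to (i) rule out a single $6$-frame being the cardinal distance-$2$ neighbour of four $4$-frames, and (ii) locate additional $6^+$- and $7^+$-frames that can help cover its large deficit. For $5$-frames I would classify the configurations by the position of the fifth codeword relative to the corners, using the partition of a frame into four consecutive triples, and for each type determine which neighbouring frames are forced to be $6^+$ or $7^+$; this supplies the small $\tfrac{3}{37}$ that each $5$-frame needs.

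I would then fix discharging rules under which $6^+$-frames send calibrated charge to the deficient frames in their $2$-ball --- distinguishing $6$-frames (which may give only sparingly) from $7^+$-frames (the principal donors) --- so that every $4$-frame collects $\tfrac{40}{37}$ and every $5$-frame collects $\tfrac{3}{37}$. The main obstacle is the conservation check: over the $2$-ball of each frame type one must simultaneously verify that every deficient frame receives its full deficit and that no donor is pulled below $\tfrac{188}{37}$. This is a substantial case analysis, dominated by the $5$-frame subtypes and by the tight configurations in which a $6$-frame is surrounded by several deficient frames; it is precisely there that the balance point $\tfrac{3}{37}$ (and hence the denominator $37$) arises, and making the structural lemmas sharp enough to exclude the genuinely overloaded configurations while keeping the number of cases finite is where the real effort lies.
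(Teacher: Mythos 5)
Your global strategy is exactly the paper's: use Proposition~\ref{prop:dens} with the frame as the window, run a discharging argument with target charge $5+\tfrac{3}{37}=\tfrac{188}{37}$ per frame, and your deficit/surplus accounting ($\tfrac{40}{37}$ needed by a $4$-frame, $\tfrac{3}{37}$ by a $5$-frame, surpluses $\tfrac{34}{37}$ and $\tfrac{71}{37}$ for $6$- and $7^+$-frames) matches what the paper verifies in Lemmas~\ref{lem:d7}--\ref{lem:d4}. However, what you have written is a plan, not a proof, and the part you defer is precisely where the theorem lives. Two things are concretely missing. First, you never establish that a $5$-frame has any donor at all: the first (and hardest) structural statement of the paper, Lemma~\ref{lem:five}, is that every $5$-frame has a $6^+$-frame in its $2$-ball, proved by a case analysis over all eighteen types of $5$-frames (Figure~\ref{fig:5frames}); your phrase ``for each type determine which neighbouring frames are forced to be $6^+$ or $7^+$'' assumes the outcome of that analysis rather than providing it. Second, and more subtly, that analysis reveals degenerate configurations (the paper's $1$-poor and $2$-poor frames) in which a $5$-frame has essentially a single $6$-frame available in its whole $2$-ball; then that lone $6$-frame must donate the full $3\alpha$ by itself, and one must prove (Lemmas~\ref{lem:1poor} and~\ref{lem:2poor}) that such a frame is never simultaneously a $4$-benefactor and serves only a bounded number of poor frames --- otherwise no calibration of rules at $\alpha=\tfrac{1}{37}$ can close the books. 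Your proposal does not anticipate that a sole-donor situation can occur, so the ``calibrated rules'' step has no argument behind it.

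The value of the constant itself also hinges on a structural fact absent from your outline: for a $4$-benefactor $6$-frame, the naive payout ($\tfrac{3\alpha+1}{4}$ to its $4$-frame plus the base rates to all $5$-frames in its $2$-ball) sums to exactly $\tfrac{37}{37}=1$, which exceeds the available surplus $\tfrac{34}{37}$; the paper closes this with Lemma~\ref{lem:4benef6}, which forces at least two additional $6^+$-frames into such a frame's $2$-ball and thereby saves at least $3\alpha$ of payout. Similarly, Lemma~\ref{lem:4benef} (a $4$-benefactor $6$-frame has a \emph{unique} $4$-frame among its four cardinal distance-$2$ neighbours, a $7^+$-frame at most two) is strictly stronger than your item (i), which only proposes excluding four $4$-frames around one $6$-frame. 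Without these quantified structural lemmas the balance point $\alpha=\tfrac{1}{37}$, and hence the bound $\tfrac{47}{111}$, cannot be derived: the gap is not bookkeeping but the entire structural analysis of the code around $4$-, $5$- and $6$-frames.
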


To prove this theorem, we will use a similar technique than the one of the proof of Theorem \ref{thm:mikko}, but giving a final charge of more than $5$ to each frame. Hence, a charge of $5$ can be seen as the reference value for the charge of a frame. We say that the {\em charge excess} of a $k$-frame is $k-5$. The {\em charge excess} within a subset $S$ of frames is the sum of the charge excesses of all the frames of $S$.

\section{Structural properties of code $C$}

We now prove some results on the structure of $C$ in the viewpoint of the frames.
We call {\em $4$-benefactor} a $6^+$-frame $F$ having a $4$-frame among frames $F+\{(0,2), (0,-2), (2,0), (-2,0)\}$. 

\begin{lemma}\label{lem:4benef}
Let $F$ be a $4$-benefactor. Then $F$ has $6^+$-frames in each corner position of its $2$-ball being at distance~$2$ from the $4$-frames among $F+\{(0,2), (0,-2), (2,0), (-2,0)\}$.
Moreover, if $F$ is a $6$-frame, there is a unique $4$-frame among frames $F+\{(0,2), (0,-2), (2,0), (-2,0)\}$.
If $F$ is a $7^+$-frame, there are at most two $4$-frames among frames $F+\{(0,2), (0,-2), (2,0), (-2,0)\}$.
\end{lemma}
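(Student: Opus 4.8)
The plan is to prove the three assertions separately, all from Lemma~\ref{lem:fourcorners} (a $4$-frame carries its codewords only in its corners), Lemma~\ref{lem:four}, and the covering conditions of Theorem~\ref{thm:equiv}; throughout I use the $4$-fold symmetry of a frame to reduce to one case. The basic tool is a \emph{translation device}: if a cardinal neighbour of $F$, say $G=F+(2,0)$, is a $4$-frame, then by Lemma~\ref{lem:fourcorners} its codewords are exactly its four corners, which are the corners of $F$ shifted by $(2,0)$. Two of them are the vertices \textbf{Ce} and \textbf{Fe} of $F$, which are thus codewords, while the shifts of \textbf{Cd},\textbf{Ce},\textbf{Fd},\textbf{Fe}, namely \textbf{Cf},\textbf{Cg},\textbf{Ff},\textbf{Fg}, are non-corner vertices of $G$ and hence non-codewords; in particular the right corners \textbf{Cf},\textbf{Ff} of $F$ are forced to be non-codewords. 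For the first assertion I simply apply Lemma~\ref{lem:four} to each $4$-frame neighbour: when $G=F+(2,0)$ is a $4$-frame, Lemma~\ref{lem:four} gives that $G+(0,2)=F+(2,2)$ and $G+(0,-2)=F+(2,-2)$ are $6^+$-frames, and these are exactly the two corner positions of the $2$-ball of $F$ at distance~$2$ from $G$; running this over all $4$-frame neighbours yields the claim.

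Next I show $F$ cannot have two \emph{opposite} $4$-frame neighbours, which will immediately give the third assertion. If both $F+(2,0)$ and $F+(-2,0)$ were $4$-frames, the translation device and its mirror image would force the right corners \textbf{Cf},\textbf{Ff} and the left corners \textbf{Cc},\textbf{Fc} of $F$ all to be non-codewords; then $F$ would contain no corner codeword, contradicting Condition~1. The same argument rules out the vertical pair. Hence at most one horizontal and at most one vertical neighbour of $F$ is a $4$-frame, so there are at most two $4$-frame neighbours in total, proving the third assertion (indeed for any frame).

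The heart of the lemma is the second assertion. Suppose for contradiction that a $6$-frame $4$-benefactor $F$ has two $4$-frame neighbours; by the previous paragraph they are perpendicular, so without loss of generality $G=F+(2,0)$ and $H=F+(0,2)$ are $4$-frames. The device applied to $G$ makes \textbf{Ce},\textbf{Fe} codewords and \textbf{Cf},\textbf{Ff},\textbf{Cg},\textbf{Fg} non-codewords; applied to $H$ it makes \textbf{Dc},\textbf{Df} codewords and \textbf{Cc},\textbf{Cf},\textbf{Bc},\textbf{Bf} non-codewords. Since the three corners \textbf{Cc},\textbf{Cf},\textbf{Ff} are now non-codewords, Condition~1 forces \textbf{Fc} to be a codeword, giving five codewords \textbf{Ce},\textbf{Fe},\textbf{Dc},\textbf{Df},\textbf{Fc} of $F$. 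I then invoke the square condition of Theorem~\ref{thm:equiv}, item~\ref{set4}, on two squares: the square $\{\textbf{Cd},\textbf{Cg},\textbf{Fd},\textbf{Fg}\}$ has its two right vertices \textbf{Cg},\textbf{Fg} non-codewords, so one of \textbf{Cd},\textbf{Fd} is a codeword; the square $\{\textbf{Bc},\textbf{Bf},\textbf{Ec},\textbf{Ef}\}$ has its two top vertices \textbf{Bc},\textbf{Bf} non-codewords, so one of \textbf{Ec},\textbf{Ef} is a codeword. These two extra codewords are distinct from the previous five and all lie in $F$, so $F$ is a $7^+$-frame, contradicting that it is a $6$-frame. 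Therefore a $6$-frame $4$-benefactor has exactly one $4$-frame neighbour.

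The main obstacle is the bookkeeping in this last step: one must read off precisely which vertices just outside $F$ are pinned to be non-codewords by the two neighbouring $4$-frames, and then recognise the two instances of the square condition that turn those non-codewords into two genuinely new codewords of $F$. Everything else reduces either to a direct appeal to Lemma~\ref{lem:four} or to the short corner-count argument against opposite pairs.
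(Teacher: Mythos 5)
Your proposal is correct and follows essentially the same route as the paper: the first assertion via Lemma~\ref{lem:four}, the exclusion of opposite $4$-frame neighbours via Condition~1 on $F$ (all four corners would be non-codewords), and the perpendicular case forcing a $7^+$-frame by pinning five codewords and then extracting two more from Condition~1 applied to the two intermediate frames --- your ``square condition'' instances $\{\textbf{Cd},\textbf{Cg},\textbf{Fd},\textbf{Fg}\}$ and $\{\textbf{Bc},\textbf{Bf},\textbf{Ec},\textbf{Ef}\}$ are exactly the corner sets of $F+(1,0)$ and $F+(0,1)$, i.e.\ the paper's appeal to Condition~1 on those frames, up to a mirror symmetry in the choice of orientation.
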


\begin{proof}
The first part of the claim is a direct consequence of Lemma~\ref{lem:four}.

There is at least one $4$-frame among $F+\{(0,2), (0,-2), (2,0), (-2,0)\}$, without loss of generality, we can assume that $F+(-2,0)$ is a $4$-frame.
Then $F+(2,0)$ cannot be a $4$-frame since then, no corner of $F$ is a codeword, contradicting Condition $1$ on $F$.
Suppose now that there is another $4$-frame among $F+\{(0,2), (0,-2)\}$, say $F+(0,2)$. As before, $F+(0,-2)$ cannot be a $4$-frame.
By Condition $1$ applied on $F$, there is a codeword in position {\bf Ff}. By Condition 1 applied on $F+(-1,0)$ and $F+(0,1)$, there are two codewords among positions {\bf Ce, Fe, Ec} and {\bf Ef}.
Hence $F$ is a $7^+$-frame and we are done.
\end{proof}

\begin{figure}[h!]
\begin{center}
\begin{tikzpicture}[scale=0.8]
\draw[mygrid] (-2,-2) grid (3,3);
\draw[line width=2pt] (0,0) rectangle (1,1);
\draw[line width=2pt] (-2,-2) rectangle (3,3);
\node at (0.5,0.5) {\Large $\bf F$};
\node at (-1.5,-1.5) {$6^+$};
\node at (-1.5,2.5) {$6^+$};
\node[labelnode] at (-1.5,0.5) {$4$};
\foreach \I in {-0.5,0.5,1.5,2.5}
	{\node[labelnode] at (\I,2.5) {$\X$};
	\node[labelnode] at (\I,-1.5) {$\X$};}
\foreach \I in {-0.5,0.5,1.5}
{\node[labelnode] at (2.5,\I) {$\X$};
\node[labelnode] at (1.5,\I) {$\Z$};}

\node[labelnode] at (0.5,1.5) {$\Z$};
\node[labelnode] at (0.5,-0.5) {$\Z$};
\node[labelnode] at (-0.5,-0.5) {$\Y$};
\node[labelnode] at (-0.5,1.5) {$\Y$};
\node[labelnode] at (-1.5,-0.5) {$\X$};
\node[labelnode] at (-1.5,1.5) {$\X$};
\node[labelnode] at (-0.5,0.5) {$\X$};
\end{tikzpicture}
\end{center}
\caption{\label{fig:4benef} $2$-ball around a $4$-benefactor $6$-frame.}
\end{figure}

 The $6^+$-frames in the corner positions of a $4$-benefactor $6^+$-frame described in the previous lemma are called the {\em co-benefactors} of $F$. The next lemma is valid for other cases by smmetry.

\begin{lemma}\label{lem:4benef6}
Let $F$ be a $4$-benefactor $6$-frame oriented as in Figure~\ref{fig:4benef}. Then:
\begin{itemize}
\item Frame $F$ has at least two $6^+$-frames $F_1$, $F_2$ in its $2$-ball (in addition to its co-benefactors).
\item If $F$ has no $6^+$-frame in a $\Y$- or $\Z$-position, then $F$ has at least three $6^+$-frames in its $2$-ball (in addition to its co-benefactors), or one of its co-benefactors is a $7^+$-frame and $F+(-1,0)$ is not a $6^+$-frame.
\end{itemize}
\end{lemma}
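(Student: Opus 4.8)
The plan is to pin down the codewords of $F$ almost completely from the hypothesis, and then read off the $6^+$-frames among its neighbours by applying Conditions~1 and~2 together with Observations~\ref{obs:NC} and~\ref{obs:CO}. First I would fix coordinates as in Figure~\ref{fig:4benef}, so that the $4$-frame is $F+(-2,0)$ and the two co-benefactors are $F+(-2,2)$ and $F+(-2,-2)$; note that reflecting the picture across the horizontal midline of $F$ (which swaps rows \textbf{C} and \textbf{F} and the two co-benefactors while fixing the $4$-frame) is a symmetry of the hypothesis, so assumptions may be made without loss of generality. By Lemma~\ref{lem:fourcorners} the $4$-frame $F+(-2,0)$ carries codewords exactly at its four corners; translating into $F$'s notation, this forces codewords at \textbf{Cd} and \textbf{Fd} and forces \textbf{Cc} and \textbf{Fc} to be non-codewords, since they are interior points of the top and bottom sides of the $4$-frame. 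Hence Condition~1 on $F$ forces a codeword at \textbf{Cf} or \textbf{Ff}; and because the two left corners of $F+(-1,0)$ are again interior non-codewords of the $4$-frame, Condition~1 on $F+(-1,0)$ forces a codeword at \textbf{Ce} or \textbf{Fe}.

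The second step is a rigidity statement obtained by counting. Column \textbf{c} must contain a codeword, necessarily at \textbf{Dc} or \textbf{Ec} since \textbf{Cc},\textbf{Fc} are excluded; column \textbf{f} must meet both of its triples while also realising Condition~1, which costs at least two codewords and includes at least one of the corners \textbf{Cf},\textbf{Ff}; and one of \textbf{Ce},\textbf{Fe} is a codeword. As $F$ is a $6$-frame, these four forced codewords (beyond \textbf{Cd},\textbf{Fd}) are all it has, so $F$ has \emph{exactly} one codeword in $\{$\textbf{Dc},\textbf{Ec}$\}$, exactly one in $\{$\textbf{Ce},\textbf{Fe}$\}$, and exactly two on column \textbf{f}. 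This leaves only a short, symmetry-reduced list of configurations to examine.

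For the first bullet I would analyse $F+(1,0)$, whose left corners are the codewords \textbf{Cd},\textbf{Fd} and whose top/bottom non-corner positions are exactly $\{$\textbf{Ce},\textbf{Cf},\textbf{Fe},\textbf{Ff}$\}$. By Observations~\ref{obs:NC} and~\ref{obs:CO}, $F+(1,0)$ is a $6^+$-frame whenever at least three of these four positions are codewords, or whenever both of \textbf{Fe},\textbf{Ff} (or both of \textbf{Ce},\textbf{Cf}) are codewords. Matching this against the rigidity list shows that $F+(1,0)$ fails to be forced $6^+$ only in the ``split'' configurations, where, up to the reflection, \textbf{Ff} and \textbf{Ce} are codewords but \textbf{Cf},\textbf{Fe} are not. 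In every non-split configuration $F+(1,0)$ is $6^+$ and a second neighbour is likewise forced; in the split configurations, where $F+(1,0)$ may be a $6$-frame, two $6^+$-frames instead appear among the $X$- and diagonal positions by the same Observation-based detection. Either way the first bullet follows.

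The second bullet is where the work concentrates, and certifying the co-benefactor count is the main obstacle. Assuming that no $Y$- or $Z$-frame is $6^+$ forces, via the $F+(1,0)$ analysis above, exactly the split configuration; the remaining constraints on $F+(1,\pm1)$, $F+(0,\pm1)$ and $F+(-1,\pm1)$ then freeze the codewords in the strip around $F$. I would first try to extract a third $6^+$-frame among the $X$-positions, obtaining alternative~(a). The delicate point is that when this fails, the same frozen pattern must instead force a seventh codeword inside one co-benefactor. Since the corners of a co-benefactor adjacent to $F$ are themselves interior non-codewords of the $4$-frame, certifying a $7^+$-frame there requires leaving the $2$-ball: one combines the codewords \textbf{Ca},\textbf{Fa} at the far corners of the $4$-frame (one lying in each co-benefactor) with Conditions~1 and~2 applied to the frames bordering the co-benefactor on its outer side. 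Carrying out this propagation carefully, while simultaneously verifying that the frozen pattern leaves $F+(-1,0)$ with only five codewords, is what yields the stated dichotomy.
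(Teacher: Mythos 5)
Your setup coincides with the paper's own and is correct as far as it goes: Lemma~\ref{lem:fourcorners} applied to the $4$-frame $F+(-2,0)$ forces codewords at \textbf{Cd}, \textbf{Fd} and non-codewords at \textbf{Cc}, \textbf{Fc} (and at \textbf{Cb}, \textbf{Fb}, \textbf{Dd}, \textbf{Ed}, etc.); Condition~1 on $F$ and on $F+(-1,0)$ together with $|F\cap C|=6$ then give exactly one codeword in $\{\textbf{Dc},\textbf{Ec}\}$, exactly one in $\{\textbf{Ce},\textbf{Fe}\}$, and exactly two codewords in column \textbf{f} including a corner; and your criterion for when $F+(1,0)$ is forced to be a $6^+$-frame (hence your ``split'' dichotomy) is also right. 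The genuine gap is that everything after this point is assertion rather than proof: ``a second neighbour is likewise forced'', ``two $6^+$-frames instead appear among the $X$- and diagonal positions'', ``the remaining constraints \dots\ freeze the codewords'', ``carrying out this propagation carefully \dots\ yields the stated dichotomy''. These sentences are placeholders for the entire content of the lemma, and nothing in the proposal verifies them.

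Moreover, the deferred steps are not routine consequences of your rigidity statement. Even in the non-split cases the second witness varies with the configuration and requires codewords \emph{outside} $F$ obtained by Condition~2 first: for instance, with column \textbf{f} equal to $\{\textbf{Cf},\textbf{Ef}\}$ and \textbf{Dc} a codeword, one must deduce \textbf{Ee} from the triple $\{\textbf{Ec},\textbf{Ed},\textbf{Ee}\}$ before Observation~\ref{obs:NC} certifies $F+(1,1)$. Under the hypothesis of the second bullet, the ``frozen pattern'' is reached only after a sequential analysis settling roughly a dozen positions (\textbf{Gd}, \textbf{Fg}, \textbf{Db}, \textbf{Bf}, \textbf{Be}, \textbf{Eg}, \textbf{Dg}, \textbf{Gg}, \textbf{Gf}, \textbf{Ge}, \textbf{Gb}, \textbf{Bd}, \textbf{Cg}, \dots), each step either exhibiting the required $6^+$-frames or forcing a non-codeword; and the endgame contains two steps that generic ``propagation'' does not reach: $F+(0,-2)$ cannot be a $4$-frame because two of its corners (\textbf{Ec}, \textbf{Ef}) are already non-codewords, which forces \textbf{Gc} and \textbf{Hc} to be codewords, and Condition~1 applied to $F+(-2,-1)$, three of whose corners (\textbf{Da}, \textbf{Dd}, \textbf{Gd}) are known non-codewords, forces \textbf{Ga}. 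Only then does the co-benefactor $F+(-2,-2)$ reach seven codewords $\{\textbf{Fa},\textbf{Fd},\textbf{Eb},\textbf{Ga},\textbf{Ha},\textbf{Hb},\textbf{Hc}\}$, and one must still check that the same pattern leaves $F+(-1,0)$ with only five. Since the lemma's dichotomy is true precisely because this case analysis closes in every branch, a proof has to run it; as written, your proposal is a faithful roadmap of the paper's argument with its core missing.
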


\begin{proof}
In this proof, we will often use Conditions 1 and 2 without explicitly referring to them.

By Condition 1 applied on $F+(-1,0)$, there is a codeword in either {\bf Ce} or {\bf Fe}. Without loss of generality, we may assume that there is a codeword in position {\bf Ce} (the other case follows by symmetry).

Since $F$ is not a $7^+$-frame, there is no codeword in position {\bf Fe} and only one codeword among positions {\bf Dc} and {\bf Ec}.

Assume there is a codeword in position {\bf Ec} and no codeword in position {\bf Dc}. Due to Condition 2, there is a codeword in {\bf De}. Hence, by Observation~\ref{obs:NC}, $F+(-1,1)$ is a $6^+$-frame. If $F+(-1,0)$ is a $6^+$-frame, we are done. Otherwise, $F+(2,1)$ is a $6^+$-frame and we are also done.

Assume now that there is no codeword in position {\bf Ec}. Then by Condition 2, there is a codeword in position {\bf Dc}.

If there is a codeword in position {\bf De}, $F+(-1,0)$ is a $6^+$-frame and, by Observation~\ref{obs:CO}, $F+(-1,1)$ is a $6^+$-frame and we are done. Hence we may suppose that there is no codeword in position {\bf De}. This implies that there is a codeword in position {\bf Df}, and no codeword in position {\bf Ef}.

If there is a codeword in position {\bf Cf}, by Observation~\ref{obs:NC}, $F+(0,1)$ is a $6^+$-frame. Moreover, by Observation~\ref{obs:CO}, $F+(1,0)$ is a $6^+$-frame and we are done. So, we may assume there is no codeword in position {\bf Cf}, which implies that there is a codeword in position {\bf Ff}.

If there is a codeword in position {\bf Gd}, by Observation~\ref{obs:NC}, $F+(-1,-1)$ and $F+(1,-2)$ are $6^+$-frames and we are done. Hence we may assume that there is no codeword in position {\bf Gd}.

If there is a codeword in position {\bf Fg}, $F+(1,0)$ is a $6^+$-frame and by Observation~\ref{obs:NC}, $F+(2,0)$ is a $6^+$-frame and we are done. So we may assume that there is no codeword in position {\bf Fg}.

If there is a codeword in position {\bf Db}, $F+(-1,0)$ is a $6$-frame and by Observation~\ref{obs:NC}, $F+(-2,-1)$ is $6^+$-frame. If $F+(-1,-1)$ is a $6^+$-frame we are done, otherwise, by Observation~\ref{obs:NC}, $F+(0,-2)$ is a $6^+$-frame. Hence we may assume that there is no codeword in position {\bf Db}.

If there is no codeword in position {\bf Bf}, then there is a codeword in position {\bf Bc}, and by Observation~\ref{obs:NC}, $F+(-2,1)$ is a $6^+$-frame. Then, there must be a codeword among positions {\bf Bd} and {\bf Be}, implying that $F+(-1,1)$ is a $6^+$-frame. Hence we may assume that there is a codeword in position {\bf Bf}.

If there is a codeword in position {\bf Be}, by Observation~\ref{obs:NC}, $F+(1,1)$ and $F+(2,2)$ are $6^+$-frames and we are done. Hence we assume that there is no codeword in position {\bf Be}.

If there is no codeword in position {\bf Eg}, by Observation~\ref{obs:NC}, $F+(2,-1)$ and $F+(2,2)$ are $6^+$-frames. Now, if there is a codeword in {\bf Bd}, then by Observation~\ref{obs:NC}, $F+(1,2)$ is a $6^+$-frame and we are done. Otherwise, there is a codeword in {\bf Bc} and {\bf Ba}, and $F+(-2,1)$ is a $6$-frame, and we are done. So, assume that there is a codeword in position {\bf Eg}.

If there is a codeword in position {\bf Dg}, then $F+(1,0)$ is a $6^+$-frame and by Observation~\ref{obs:NC}, $F+(2,-1)$ is a $6^+$-frame. So we assume there is no codeword in position {\bf Dg}, which implies that there is a codeword in position {\bf Gg} by Condition 1.

If there is a codeword in position {\bf Gf}, by Observation~\ref{obs:NC}, $F+(0,-2)$ and $F+(2,-1)$ are $6^+$-frames. Now, if there is a codeword in position {\bf Ge}, $F+(1,-1)$ is a $6^+$-frame and we are done. Otherwise, there is a codeword in position {\bf Gb} and $F+(-2,-1)$ is a $6^+$-frame.
So we assume that there is no codeword in position {\bf Gf}, which implies that there is a codeword in position {\bf Ge}.

If there is a codeword in position {\bf Gb}, $F+(-1,-1)$ and $F+(-2,-1)$ are $6$-frames and we are done. So we assume that there is no codeword in position {\bf Gb}.

If there is a codeword in position {\bf Bd}, by Observation~\ref{obs:NC}, $F+(1,2)$ is a $6^+$-frame. Moreover $F+(1,1)$ is a $6^+$-frame so we are done. 
So we assume there is no codeword in position {\bf Bd}. Then there is a codeword in both positions {\bf Ba} and {\bf Bc}, implying that $F+(-2,1)$ is a $6$-frame.

If there is a codeword in position {\bf Cg}, $F+(1,0)$ is a $6$-frame and we are done. So, we assume there is no codeword in {\bf Cg}. Then, by Observation~\ref{obs:NC}, $F+(2,1)$ is a $6$-frame.

Now, if $F+(1,-2)$ is a $6^+$-frame we are done. Otherwise, there is no codeword in position {\bf Hd} and exactly one codeword among positions {\bf He} and {\bf Hf}. Since $F+(0,-2)$ is a $5^+$-frame, there is a codeword in position {\bf Hc}. By Condition 1, there is a codeword in position {\bf Ha}. This implies that $F+(-2,-2)$ is a $7$-frame and finishes the case analysis.
\end{proof}

%
A frame is called {\em $1$-poor} (resp. {\em $2$-poor}) if it is a $5$-frame having at most one $6$-frame in its $2$-ball (resp. no $6^+$-frame at distance~$1$ and at most two $6$-frames at distance~$2$).

\begin{lemma}\label{lem:five}
Let $F$ be a $5$-frame, then $F$ has at least one $6^+$-frame in its $2$-ball.
Moreover, one of the following properties holds:
\begin{itemize}
\item $F$ has a $6$-frame at distance~$1$ and another $6^+$-frame in its $2$-ball,
\item $F$ has a total charge excess of at least $3$ within its $2$-ball, with at least two $6^+$-frames,
\item $F$ is $1$-poor and, up to symmetry, the configuration around $F$ is depicted on Figure \ref{fig:1poor},
\item $F$ is $2$-poor and, up to symmetry, the configuration around $F$ is depicted on Figure \ref{fig:2poor}.
\end{itemize}
\end{lemma}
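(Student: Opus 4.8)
The plan is to run a structural case analysis on the five codewords of $F$, in the same spirit as the proof of Lemma~\ref{lem:4benef6}, repeatedly invoking Conditions~1 and~2 on translates of $F$ together with Observations~\ref{obs:NC} and~\ref{obs:CO} to locate $6^+$-frames in the $2$-ball. First I would exploit the dihedral symmetry of a frame: by Condition~1 at least one corner of $F$ is a codeword, so up to the eight symmetries I may assume that \textbf{Cc} is a codeword. I would then classify the configurations of the five codewords according to how many corners of $F$ they occupy. Since a $4$-frame already uses all four corners (Lemma~\ref{lem:fourcorners}), a $5$-frame uses between one and four corners; for each count I would determine, using Condition~2 on the four sides and the restrictions coming from Observations~\ref{obs:NC} and~\ref{obs:CO}, the finitely many admissible placements of the remaining non-corner codewords up to symmetry. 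This yields a bounded list of ``internal'' configurations of $F$ to examine (for instance, four corners plus one non-corner codeword collapses to a single configuration, since all eight non-corner cells lie in one symmetry orbit).

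For each internal configuration I would propagate the constraints one and two layers outward. Concretely, applying Condition~1 to $F+(\pm1,0)$ and $F+(0,\pm1)$ forces codewords in the cells immediately outside $F$; combining these with the codewords already fixed inside $F$, Observations~\ref{obs:NC} and~\ref{obs:CO} then certify that various frames of the $2$-ball are $6^+$-frames, exactly as in the chain of deductions used in Lemma~\ref{lem:4benef6}. I would tally the resulting charge excess. In the generic situation this tally already exhibits either a $6$-frame at distance~$1$ together with a second $6^+$-frame, or a total charge excess of at least~$3$ carried by at least two $6^+$-frames, placing $F$ in the first or second alternative; in particular $F$ always has at least one $6^+$-frame in its $2$-ball, which yields the opening assertion of the lemma.

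The delicate part is the remaining branches, where the forcing produces too few $6^+$-frames to meet the thresholds of the first two alternatives. There I would argue that the very scarcity of nearby $6^+$-frames rigidifies the code: each ``missing'' $6^+$-frame forbids a codeword in certain outside cells, and Condition~2 (respectively Condition~1) applied to the corresponding triples then forces codewords elsewhere, so that propagating the absence of charge through the $2$-ball leaves a single admissible pattern up to symmetry. I would verify that this pattern is precisely the configuration of Figure~\ref{fig:1poor} when $F$ is $1$-poor and that of Figure~\ref{fig:2poor} when $F$ is $2$-poor, checking in each case that the relevant $6$-frames are counted correctly (at most one in the $2$-ball in the $1$-poor case; none at distance~$1$ and at most two at distance~$2$ in the $2$-poor case).

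The main obstacle is not a single hard estimate but the exhaustiveness and bookkeeping of the analysis: keeping the symmetry reductions honest so that no admissible internal configuration of $F$ is overlooked or double-counted; correctly identifying, for each branch, which of the twenty-five frames of the $2$-ball are $6^+$ and with what excess; and, hardest of all, establishing rigidity in the poor cases, namely that the long implication chains really terminate in the unique configurations drawn in Figures~\ref{fig:1poor} and~\ref{fig:2poor} rather than in some overlooked alternative. As in the proof of Lemma~\ref{lem:4benef6}, I expect several branches each requiring a dozen or so successive applications of Conditions~1--2 and Observations~\ref{obs:NC}--\ref{obs:CO}.
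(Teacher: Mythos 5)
Your strategy coincides with the paper's own: the paper classifies a $5$-frame by its corner codewords into the eighteen configurations of Figure~\ref{fig:5frames} (cases $A_1$--$A_{10}$ with one corner codeword, $B_1$--$B_4$ with two corners on one side, $C_1$--$C_3$ with three corners, $D$ with four, after noting that two opposite corners would force a $6^+$-frame), and then for each configuration runs exactly the propagation you describe, applying Conditions~1--2 to translates of $F$ and invoking Observations~\ref{obs:NC} and~\ref{obs:CO} to certify $6^+$-frames in the $2$-ball, with the $1$-poor and $2$-poor patterns of Figures~\ref{fig:1poor} and~\ref{fig:2poor} emerging as the rigid terminal branches (both occur inside case $C_3$). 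So there is no disagreement of method.

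The genuine gap is that everything after the symmetry reduction is deferred: you state what you \emph{would} verify, but verify none of it, and for this lemma the verification \emph{is} the proof. Concretely, you never produce the list of admissible internal configurations (so exhaustiveness is unchecked); you never determine which configurations can actually be poor --- in the paper only the three-corner case $C_3$ produces them, and discovering this, together with the two rigid codeword patterns, is the hardest part of the argument; and your charge-excess tallies are asserted generically rather than established per case. Note also that your announced toolkit is slightly too small: the paper's case $A_8$ needs Lemma~\ref{lem:four} (a neighbouring $4$-frame forcing $6^+$-frames two steps away), not just Conditions~1--2 and the two observations. As it stands, a reader could only check your argument by reconstructing the paper's several pages of deductions, so the proposal is a correct blueprint of the right approach rather than a proof.
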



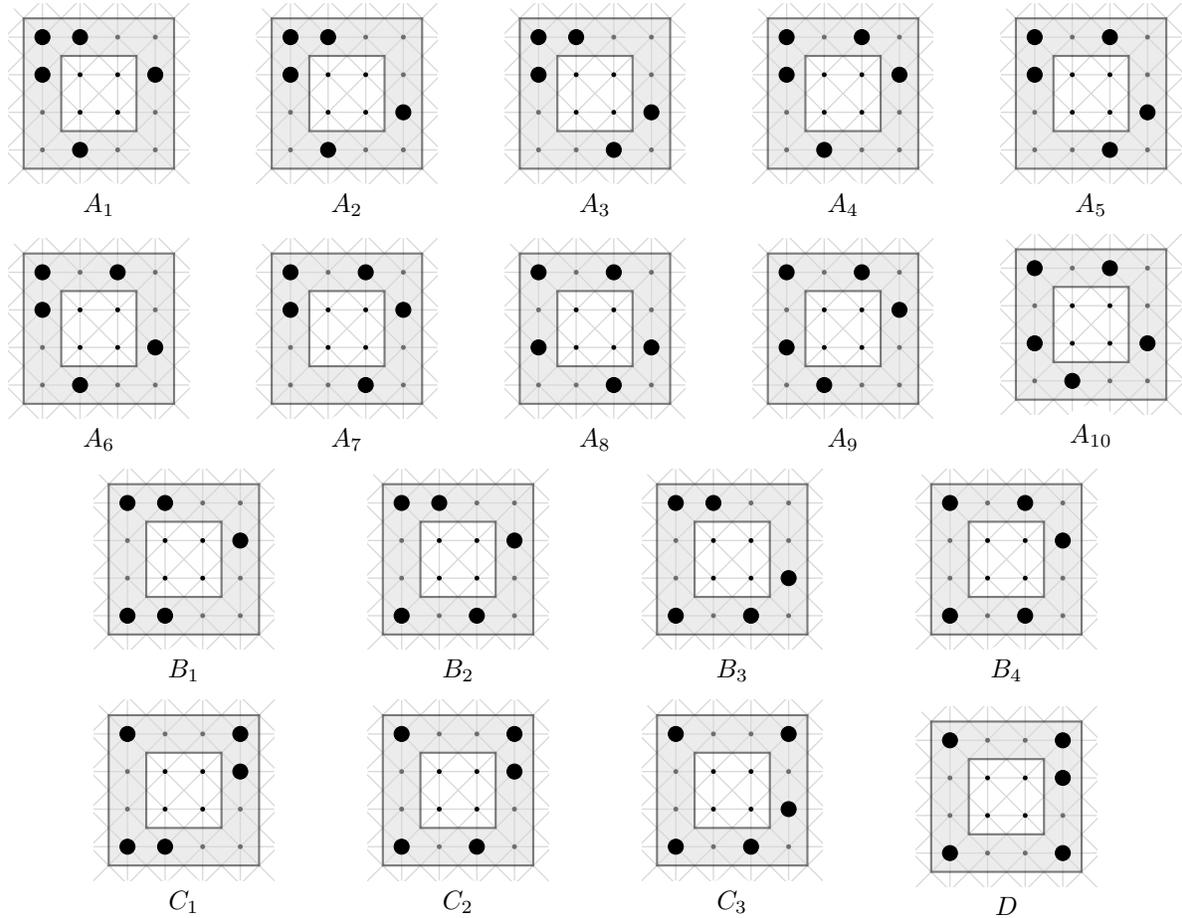
\begin{figure}[h]
\begin{center}

\begin{tikzpicture}[scale=0.5]
\framegrid

\begin{scope}[even odd rule]
\filldraw[patternus] (-2.5,-2.5) rectangle (1.5,1.5)
 (-1.5,-1.5) rectangle (0.5,0.5);
\end{scope}
\foreach \pos in {(-2,1),(-2,0),(-1,1),(-1,-2),(1,0)}
	{\node[code] at \pos {};}
\node[labelnode2] at (-0.5,-3.5) {$A_1$};
\end{tikzpicture}
\hfill
\begin{tikzpicture}[scale=0.5]
\framegrid

\begin{scope}[even odd rule]
\filldraw[patternus] (-2.5,-2.5) rectangle (1.5,1.5)
 (-1.5,-1.5) rectangle (0.5,0.5);
\end{scope}
\foreach \pos in {(-2,1),(-2,0),(-1,1),(-1,-2),(1,-1)}
	{\node[code] at \pos {};}
\node[labelnode2] at (-0.5,-3.5) {$A_2$};
\end{tikzpicture}
\hfill
\begin{tikzpicture}[scale=0.5]
\framegrid
\begin{scope}[even odd rule]
\filldraw[patternus] (-2.5,-2.5) rectangle (1.5,1.5)
 (-1.5,-1.5) rectangle (0.5,0.5);
\end{scope}
\foreach \pos in {(-2,1),(-2,0),(-1,1),(0,-2),(1,-1)}
	{\node[code] at \pos {};}
\node[labelnode2] at (-0.5,-3.5) {$A_3$};
\end{tikzpicture}
\hfill
\begin{tikzpicture}[scale=0.5]
\framegrid
\begin{scope}[even odd rule]
\filldraw[patternus] (-2.5,-2.5) rectangle (1.5,1.5)
 (-1.5,-1.5) rectangle (0.5,0.5);
\end{scope}
\foreach \pos in {(-2,1),(-2,0),(0,1),(-1,-2),(1,0)}
	{\node[code] at \pos {};}
\node[labelnode2] at (-0.5,-3.5) {$A_4$};
\end{tikzpicture}
\hfill
\begin{tikzpicture}[scale=0.5]
\framegrid

\begin{scope}[even odd rule]
\filldraw[patternus] (-2.5,-2.5) rectangle (1.5,1.5)
 (-1.5,-1.5) rectangle (0.5,0.5);
\end{scope}
\foreach \pos in {(-2,1),(-2,0),(0,1),(0,-2),(1,-1)}
	{\node[code] at \pos {};}
\node[labelnode2] at (-0.5,-3.5) {$A_5$};
\end{tikzpicture}

\begin{tikzpicture}[scale=0.5]
\framegrid

\begin{scope}[even odd rule]
\filldraw[patternus] (-2.5,-2.5) rectangle (1.5,1.5)
 (-1.5,-1.5) rectangle (0.5,0.5);
\end{scope}
\foreach \pos in {(-2,1),(-2,0),(0,1),(-1,-2),(1,-1)}
	{\node[code] at \pos {};}
\node[labelnode2] at (-0.5,-3.5) {$A_6$};
\end{tikzpicture}
\hfill
\begin{tikzpicture}[scale=0.5]
\framegrid

\begin{scope}[even odd rule]
\filldraw[patternus] (-2.5,-2.5) rectangle (1.5,1.5)
 (-1.5,-1.5) rectangle (0.5,0.5);
\end{scope}
\foreach \pos in {(-2,1),(-2,0),(0,1),(0,-2),(1,0)}
	{\node[code] at \pos {};}
\node[labelnode2] at (-0.5,-3.5) {$A_7$};
\end{tikzpicture}
\hfill
\begin{tikzpicture}[scale=0.5]
\framegrid
\begin{scope}[even odd rule]
\filldraw[patternus] (-2.5,-2.5) rectangle (1.5,1.5)
 (-1.5,-1.5) rectangle (0.5,0.5);
\end{scope}
\foreach \pos in {(-2,1),(-2,-1),(0,1),(0,-2),(1,-1)}
	{\node[code] at \pos {};}
\node[labelnode2] at (-0.5,-3.5) {$A_8$};
\end{tikzpicture}
\hfill
\begin{tikzpicture}[scale=0.5]
\framegrid

\begin{scope}[even odd rule]
\filldraw[patternus] (-2.5,-2.5) rectangle (1.5,1.5)
 (-1.5,-1.5) rectangle (0.5,0.5);
\end{scope}
\foreach \pos in {(-2,1),(-2,-1),(0,1),(-1,-2),(1,0)}
	{\node[code] at \pos {};}
\node[labelnode2] at (-0.5,-3.5) {$A_9$};
\end{tikzpicture}
\hfill
\begin{tikzpicture}[scale=0.5]
\framegrid
\begin{scope}[even odd rule]
\filldraw[patternus] (-2.5,-2.5) rectangle (1.5,1.5)
 (-1.5,-1.5) rectangle (0.5,0.5);
\end{scope}
\foreach \pos in {(-2,1),(-2,-1),(0,1),(-1,-2),(1,-1)}
	{\node[code] at \pos {};}
\node[labelnode2] at (-0.5,-3.5) {$A_{10}$};
\end{tikzpicture}

\begin{tikzpicture}[scale=0.5]
\framegrid

\begin{scope}[even odd rule]
\filldraw[patternus] (-2.5,-2.5) rectangle (1.5,1.5)
 (-1.5,-1.5) rectangle (0.5,0.5);
\end{scope}
\foreach \pos in {(-2,1),(-2,-2),(-1,1),(-1,-2),(1,0)}
	{\node[code] at \pos {};}
\node[labelnode2] at (-0.5,-3.5) {$B_1$};
\end{tikzpicture}
\hfil
\begin{tikzpicture}[scale=0.5]
\framegrid

\begin{scope}[even odd rule]
\filldraw[patternus] (-2.5,-2.5) rectangle (1.5,1.5)
 (-1.5,-1.5) rectangle (0.5,0.5);
\end{scope}
\foreach \pos in {(-2,1),(-2,-2),(-1,1),(0,-2),(1,0)}
	{\node[code] at \pos {};}
\node[labelnode2] at (-0.5,-3.5) {$B_2$};
\end{tikzpicture}
\hfil
\begin{tikzpicture}[scale=0.5]
\framegrid

\begin{scope}[even odd rule]
\filldraw[patternus] (-2.5,-2.5) rectangle (1.5,1.5)
 (-1.5,-1.5) rectangle (0.5,0.5);
\end{scope}
\foreach \pos in {(-2,1),(-2,-2),(-1,1),(0,-2),(1,-1)}
	{\node[code] at \pos {};}
\node[labelnode2] at (-0.5,-3.5) {$B_3$};
\end{tikzpicture}
\hfil
\begin{tikzpicture}[scale=0.5]
\framegrid
\begin{scope}[even odd rule]
\filldraw[patternus] (-2.5,-2.5) rectangle (1.5,1.5)
 (-1.5,-1.5) rectangle (0.5,0.5);
\end{scope}
\foreach \pos in {(-2,1),(-2,-2),(0,1),(0,-2),(1,0)}
	{\node[code] at \pos {};}
\node[labelnode2] at (-0.5,-3.5) {$B_4$};
\end{tikzpicture}

\begin{tikzpicture}[scale=0.5]
\framegrid

\begin{scope}[even odd rule]
\filldraw[patternus] (-2.5,-2.5) rectangle (1.5,1.5)
 (-1.5,-1.5) rectangle (0.5,0.5);
\end{scope}
\foreach \pos in {(-2,1),(-2,-2),(-1,-2),(1,1),(1,0)}
	{\node[code] at \pos {};}
\node[labelnode2] at (-0.5,-3.5) {$C_1$};
\end{tikzpicture}
\hfil
\begin{tikzpicture}[scale=0.5]
\framegrid

\begin{scope}[even odd rule]
\filldraw[patternus] (-2.5,-2.5) rectangle (1.5,1.5)
 (-1.5,-1.5) rectangle (0.5,0.5);
\end{scope}
\foreach \pos in {(-2,1),(-2,-2),(0,-2),(1,1),(1,0)}
	{\node[code] at \pos {};}
\node[labelnode2] at (-0.5,-3.5) {$C_2$};
\end{tikzpicture}
\hfil
\begin{tikzpicture}[scale=0.5]
\framegrid
\begin{scope}[even odd rule]
\filldraw[patternus] (-2.5,-2.5) rectangle (1.5,1.5)
 (-1.5,-1.5) rectangle (0.5,0.5);
\end{scope}
\foreach \pos in {(-2,1),(-2,-2),(0,-2),(1,1),(1,-1)}
	{\node[code] at \pos {};}
\node[labelnode2] at (-0.5,-3.5) {$C_3$};
\end{tikzpicture}
\hfil
\begin{tikzpicture}[scale=0.5]
\framegrid

\begin{scope}[even odd rule]
\filldraw[patternus] (-2.5,-2.5) rectangle (1.5,1.5)
 (-1.5,-1.5) rectangle (0.5,0.5);
\end{scope}
\foreach \pos in {(-2,1),(-2,-2),(1,-2),(1,1),(1,0)}
	{\node[code] at \pos {};}
\node[labelnode2] at (-0.5,-3.5) {$D$};
\end{tikzpicture}
\end{center}
\caption{\label{fig:5frames} All the possibilities for a $5$-frame.}
\end{figure}


\begin{proof}
Let $F$ be $5$-frame. We make a case analysis according to the configurations of $F$ depicted on Figure~\ref{fig:5frames}. Note that cases $A_1$ to $A_10$ represent
cases where $F$ has only one corner codeword, cases $B_1$ to $B_4$ are those
cases where $F$ has exactly two corner codewords on the same side,
cases $C_1$ to $C_3$ are the cases where $F$ has three corner codewords, and
case $D$ is the case where $F$ has four corner codewords. The cases
which are symmetric to those of Figure~\ref{fig:5frames} will
follow from the same arguments.  Moreover, note that if $F$ had
exactly two corner codewords on opposite corners, by Condition 2, each
side of $F$ would contain an additional codeword and $F$ would be a
$6^+$-frame.

\vspace{0.3cm}
{\bf Case $A_1$.} By Observation~\ref{obs:NC}, $F+(-1,0)$ and $F+(0,1)$ both are $6^+$-frames and we are done.

\vspace{0.3cm}
{\bf Case $A_2$.} By Observation~\ref{obs:NC}, $F+(-1,0)$ is a $6^+$-frame. By Observation~\ref{obs:CO}, $F+(0,1)$ is a $6^+$-frame and we are done.

\vspace{0.3cm}
{\bf Case $A_3$.} By Observation~\ref{obs:CO}, $F+(-1,0)$ and $F+(0,1)$ both are $6^+$-frames and we are done.

\vspace{0.3cm}
{\bf Case $A_4$.} By Observation~\ref{obs:NC}, $F+(0,1)$ is a $6^+$-frame. Now, by Condition 2, there is a codeword on either position {\bf Dd} or {\bf Ed}. In either case, by Observation~\ref{obs:NC}, $F+(-1,2)$ or $F+(1,-1)$ respectively, is a $6^+$-frame and we are done.

\vspace{0.3cm}
{\bf Case $A_5$.} By Observation~\ref{obs:CO}, $F+(0,1)$ is a $6^+$-frame. Now, by Condition 2, there is a codeword on either position {\bf Dd} or {\bf De}. In either case, by Observation~\ref{obs:NC}, $F+(-1,-1)$ or $F+(2,1)$ respectively, is a $6^+$-frame and we are done.

\vspace{0.3cm}
{\bf Case $A_6$.} By Observation~\ref{obs:CO}, $F+(0,1)$ is a $6^+$-frame. Now, by Condition 2, there is a codeword in either position {\bf Dd} or {\bf Ed}. In both cases, by Observation~\ref{obs:NC}, $F+(-1,-1)$ or $F+(1,-1)$ respectively, is a $6^+$-frame and we are done.

\vspace{0.3cm}
{\bf Case $A_7$.} By Observation~\ref{obs:NC}, $F+(0,1)$ is a $6^+$-frame. If $F+(-1,-1)$ is a $6^+$-frame, we are done. Otherwise, from the assumptions there are already four non-corner codewords in $F+(-1,-1)$, hence $F+(-1,-1)$ is a $5$-frame and, by Condition~1, all its further non-corner positions contain no codeword. In particular, there is no codeword in both positions {\bf Dd} and {\bf Gd}. By Condition 1, there is a codeword on at least one of the positions {\bf Dg} and {\bf Gg}. Hence, by Observation~\ref{obs:NC}, $F+(2,-1)$ is a $6^+$-frame and we are done.

\vspace{0.3cm}
{\bf Case $A_8$.} Assume there is no codeword in position {\bf Dd}. Then, $F+(-1,0)$ and $F+(2,1)$ are $6^+$-frames and we are done. So, assume there is a codeword in position {\bf Dd}.

If there is a codeword in position {\bf Ed}, by Observation~\ref{obs:NC}, $F+(-2,0)$ and $F+(1,0)$ are $6^+$-frames. Hence we may assume there is no codeword in position {\bf Ed}, and by symmetry, no codeword in position {\bf De} either.

If there is a codeword in position {\bf Ee}, by Observation~\ref{obs:NC}, $F+(1,1)$ and $F+(-1,-1)$ are $6^+$-frames. Hence we may assume there is no codeword in position {\bf Ee}.

If there is a codeword in position {\bf Be}, by Observation~\ref{obs:NC}, $F+(1,1)$ and $F+(2,2)$ are $6^+$-frames. Hence we may assume there is no codeword in position {\bf Be}, and by symmetry, no codeword in position {\bf Eb} either. By Condition 1, this implies that there is a codeword in position {\bf Bb}.

If there is a codeword in position {\bf Eg}, by Observation~\ref{obs:NC}, $F+(1,0)$ and $F+(2,1)$ are $6^+$-frames. Hence we may assume there is no codeword in position {\bf Eg}, and by symmetry, no codeword in position {\bf Ge} either.

If there is no codeword in position {\bf Bd}, by Observation~\ref{obs:NC}, $F+(0,2)$ and $F+(-2,1)$ are $6^+$-frames. If $F+(1,1)$ is a $6^+$-frame, we are done. Otherwise, by Observation~\ref{obs:NC}, $F+(2,1)$ is a $6^+$-frame and we are done too. Hence we may assume there is a codeword in position {\bf Bd}, and by symmetry, another codeword in position {\bf Db}.

If there is a codeword in position {\bf Bc}, $F+(0,1)$ is a $6$-frame and by Observation~\ref{obs:NC}, $F+(-1,1)$ is a $6^+$-frame, so we are done. Hence we may assume there is no codeword in position {\bf Bc}, and by symmetry, no codeword in position {\bf Cb} either.

If there is a codeword in position {\bf Gb}, $F+(-1,-1)$ and $F+(-1,-2)$ are $6^+$-frames. Hence we may assume there is no codeword in position {\bf Gb}, and by symmetry, no codeword in position {\bf Bg} either.

If there is no codeword in position {\bf Gf}, there is, by Condition 1, a codeword in position {\bf Gc} and then $F+(-1,-1)$ is a $6$-frame. Now, if there is no codeword in position {\bf Fg}, $F+(1,-1)$ is a $4$-frame and by Lemma~\ref{lem:four}, $F+(1,1)$ is a $6^+$-frame and we are done. Otherwise, by Observation~\ref{obs:NC}, $F+(1,-2)$ is a $6^+$-frame. Hence we may assume that there is a codeword in position {\bf Gf}, and by symmetry, another codeword in position {\bf Fg}.

If there is a codeword in position {\bf Gg}, $F+(1,-1)$ is a $6$-frame and by Observation~\ref{obs:NC}, $F+(1,-2)$ is a $6^+$-frame, so we are done. Hence we may assume there is no codeword in position {\bf Gg}.

If there is a codeword in position {\bf Gc}, $F+(0,-1)$ and $F+(-1,-1)$ is a $6^+$-frame, so we are done. Hence we may assume there is no codeword in positions {\bf Gc} and {\bf Cg}.

If $F+(-1,2)$ is a $5$-frame, by Condition 2, there is no codeword in position {\bf Ae} and a codeword on exactly one of the positions {\bf Ac} and {\bf Ad}. Then there must be a codeword in position {\bf Af} because $F+(0,2)$ has a fifth codeword. But then $F+(2,2)$ is a $6^+$-frame. Hence, there is a $6^+$-frame among $F+(-1,2)$ and $F+(2,2)$. By symmetry, there is also a $6^+$-frame among $F+(-2,1)$ and $F+(-2,-2)$.

If $F+(-1,-2)$ is a $6^+$-frame, we are done. Otherwise, there is no codeword in position {\bf Hd}, which implies that there is a codeword in position {\bf Hg}, as well as one codeword among positions {\bf He} and {\bf Hf}. By symmetry, if $F+(2,1)$ is a $5$-frame, there a codeword in position {\bf Gh}, as well as a codeword among positions {\bf Eh} and {\bf Fh}. Then, $F+(2,-2)$ is a $6^+$-frame and we are done.

\vspace{0.3cm}
{\bf Case $A_9$.} By Condition 2, there are at least two codewords among positions {\bf Dd}, {\bf De}, {\bf Ed} and {\bf Ee}. If there are two codewords on the same column or line, (resp. {\bf d}, {\bf D}, {\bf e}, {\bf E}), then by Observation~\ref{obs:NC}, resp. $F+(1,0)$, $F+(0,-1)$, $F+(-1,0)$, $F+(0,1)$, is a $6^+$-frame. Hence, if there are three codewords among positions {\bf Dd}, {\bf De}, {\bf Ed} and {\bf Ee}, there are two $6^+$-frames at distance~1 of $F$ and we are done.

Therefore, we assume that there are exactly two codewords among positions {\bf Dd}, {\bf De}, {\bf Ed} and {\bf Ee}. Then by Condition 2, they cannot lie on a same line or column. If there are codewords in positions {\bf De} and {\bf Ed}, by Observation~\ref{obs:NC}, $F+(1,-1)$ and $F+(-1,1)$ are $6^+$-frames and we are done. Hence, we assume that there are codewords in positions {\bf Dd} and {\bf Ee}.

If there is a codeword in position {\bf Eb}, by Observation~\ref{obs:NC}, $F+(-2,1)$ is a $6^+$-frame, and by Condition 2, $F+(-1,1)$ is a $6^+$-frame as well. Hence we may assume there is no codeword in position {\bf Eb}.

If there is a codeword in position {\bf Cb}, by Observation~\ref{obs:NC}, $F+(-2,0)$ is a $6^+$-frame, and by Condition 2, $F+(-1,0)$ is a $6^+$-frame as well. Hence we may assume there is no codeword in position {\bf Cb}.

If there is a codeword in position {\bf Gd}, $F+(1,-1)$ is necessarily a $6^+$-frame, and by Observation~\ref{obs:NC}, $F+(1,-2)$ is a $6^+$-frame. Hence we may assume there is no codeword in position {\bf Gd}.

By symmetry, we can assume that there is no codeword in any of the positions {\bf Be}, {\bf Bc} and {\bf Dg}. Using Condition 2, this implies that there are codewords in positions {\bf Db} and {\bf Bd}.

If there is a codeword in position {\bf Gg}, $F+(1,-1)$ is necessarily a $6^+$-frame, and by Observation~\ref{obs:NC}, $F+(2,-1)$ is a $6^+$-frame. Hence we may assume there is no codeword in position {\bf Gg}.

If there is a codeword in position {\bf Ge}, $F+(1,-1)$ is necessarily a $6^+$-frame. Now, if there is a codeword in position {\bf Fb}, $F+(-1,-1)$ is necessarily a $6$-frame too. Otherwise, either $F+(-2,-1)$, $F+(-1,1)$ or $F+(-2,2)$ is a $6^+$-frame, and we are done. Hence we may assume there is no codeword in position {\bf Ge} and by symmetry, there is no codeword in position {\bf Eg} either.

If there is a codeword in position {\bf Fb} (resp. in position {\bf Bf}), $F+(-1,0)$ (resp. $F+(0,1)$) is necessarily a $6$-frame. If there is a codeword in both positions, we are done. Hence by symmetry, without loss of generality, we can assume that there is no codeword in position {\bf Fb}. This implies that $F+(-2,-1)$ is a $6^+$-frame. If there is a $6^+$-frame at distance~1 of $F$, we are done. Otherwise, $F+(1,2)$ is a $6^+$-frame. If $F+(-2,2)$ is a $6^+$-frame, we are done. Otherwise, $F+(-2,-1)$ and $F+(1,2)$ are both necessarily $7^+$-frames and we are done too.

\vspace{0.3cm}
{\bf Case $A_{10}$.} Assume that there are no codewords among positions {\bf Ed} and {\bf Ee}. Then, by Condition 2, there are codewords on both positions {\bf Dd} and {\bf De}, and by Observation~\ref{obs:NC}, $F+(0,-1)$ and $F+(2,1)$ are $6^+$-frames.

Now, assume that there is a codeword in position {\bf Ed} or {\bf Ee}. Then, by Condition 2, $F+(0,1)$ is a $6^+$-frame. If there is a codeword in position {\bf Ed}, by Observation~\ref{obs:NC}, $F+(1,-1)$ is a $6^+$-frame. If there is a codeword in position {\bf Ee}, by Observation~\ref{obs:NC}, $F+(1,1)$ is a $6^+$-frame. In both cases we are done.

\vspace{0.3cm}
{\bf Case $B_1$.} By Observation~\ref{obs:NC}, $F+(-1,0)$ is a $6^+$-frame. By Condition 1, there is a codeword among positions {\bf Cb} and {\bf Fb}. Then, by Observation~\ref{obs:NC}, $F+(-2,0)$ is a $6^+$-frame and we are done.

\vspace{0.3cm}
{\bf Case $B_2$.} By Observation~\ref{obs:NC}, $F+(-1,0)$ is a $6^+$-frame. By Condition 2, there is a codeword among positions {\bf Dd} and {\bf Ed}. If there is a codeword in position {\bf Dd} (resp. {\bf Ed} and not in {\bf Dd}), by Observation~\ref{obs:NC}, $F+(1,1)$ (resp. $F+(1,-1)$) is a $6^+$-frame and we are done.

\vspace{0.3cm}
{\bf Case $B_3$.} By Observation~\ref{obs:NC}, $F+(-1,0)$ is a $6^+$-frame. If $F+(1,1)$ is a $6^+$-frame, we are done. Otherwise, $F+(1,1)$ is a $5$-frame and it has a unique corner codeword in position {\bf Ed}. By Condition 2, this implies that there is a codeword in position {\bf Bc}, and then by Observation~\ref{obs:NC}, $F+(0,2)$ is a $6^+$-frame and we are done.

\vspace{0.3cm}
{\bf Case $B_4$.} Assume first that there are no codewords among positions {\bf De} and {\bf Ee}. Then by Condition 2, there must a codeword on both positions {\bf Dd} and {\bf Ed}. Then by Observation~\ref{obs:NC}, $F+(1,0)$ and $F+(-2,0)$ are $6^+$-frames.

If there is a codeword in position {\bf De} or {\bf Ee}, then $F+(-1,0)$ is a $6^+$-frame. If there is a codeword in position {\bf De} (resp. {\bf Ee}), by Observation~\ref{obs:NC}, $F+(1,-1)$ (resp. $F+(2,-1)$) is a $6^+$-frame and we are done.

\vspace{0.3cm}
{\bf Case $C_1$.} By Observation~\ref{obs:NC}, both $F+(-1,0)$ and $F+(0,1)$ are $6^+$-frames.

\vspace{0.3cm}
{\bf Case $C_2$.} By Observation~\ref{obs:NC}, $F+(0,1)$ is a $6^+$-frame. By Condition 1, there is a codeword among positions {\bf Bc} and {\bf Bf}. Then, by Observation~\ref{obs:NC}, $F+(0,2)$ is a $6^+$-frame.

\vspace{0.3cm}
{\bf Case $C_3$.} Assume first that there is no codeword in position {\bf De}. Then, there must be codewords in both positions {\bf Dd} and {\bf Ee}. Then, by Observation~\ref{obs:NC}, $F+(1,1)$ and $F+(-1,-1)$ are both $6^+$-frames. Hence we can assume that there is a codeword in position {\bf De}, and, by symmetry in position {\bf Ed}.

If there is a codeword in position {\bf Dd}, by Observation~\ref{obs:NC}, $F+(0,2)$ and $F+(0,-1)$ are $6^+$-frames and we are done. Hence we may assume there is no codeword in position {\bf Dd}.

If there is a codeword in position {\bf Ee}, by Observation~\ref{obs:NC}, $F+(0,1)$ and $F+(2,0)$ are $6^+$-frames and we are done. Hence we may assume there is no codeword in position {\bf Ee}.

If there is a codeword in position {\bf Bc} or {\bf Bf}, by Observation~\ref{obs:NC}, $F+(0,2)$ is a $6^+$-frame and $F+(0,1)$ must be a $6^+$-frame too, so we are done. Hence we may assume there is no codeword in positions {\bf Bc} and {\bf Bf} and, by symmetry, on  positions {\bf Cb} and {\bf Fb}.

If there is a codeword in position {\bf Be}, then $F+(0,1)$ is a $6^+$-frame. Now, if either $F+(-1,1)$ or $F+(-2,2)$ is a $6^+$-frame, we are done. Otherwise, there is no codeword in position {\bf Ca} and hence there is a codeword in position {\bf Fa} by Condition 1.This implies that $F+(-2,-1)$ is a  $6^+$-frame, so we are done. Hence we may assume there is no codeword in position {\bf Be} and, by symmetry, in position {\bf Eb}.

If there is a codeword in position {\bf Eg}, by Observation~\ref{obs:NC}, $F+(2,1)$ and $F+(2,-2)$ are $6^+$-frames. If $F+(1,0)$ is a $6^+$-frame, we are done. Otherwise, there is no codeword in position {\bf Dg}, and a codeword among positions {\bf Cg}  and {\bf Fg}. If there is a codeword in position {\bf Fg}, by Observation~\ref{obs:NC}, $F+(1,-1)$ is a $6^+$-frame. If there is a codeword in position {\bf Cg}, by Observation~\ref{obs:NC}, $F+(2,0)$ is a $6^+$-frame. Hence we may assume there is no codeword in position {\bf Eg} and, by symmetry, in position {\bf Ge}.

If there is a codeword in position {\bf Gc}, by Observation~\ref{obs:NC}, $F+(0,-2)$ and $F+(-2,-1)$ are $6^+$-frames. If there are only two $6$-frames and no additional $6^+$-frame in the $2$-ball of $F$, $F$ is $2$-poor, then, using the same techniques than previously, one can check that the codewords around $F$ are fixed as in Figure \ref{fig:2poor}  and we are done. Otherwise, there is a total charge excess of at least $3$ within the $2$-ball of $F$ and we are done too. Hence we may assume there is no codeword in position {\bf Gc} and, by symmetry, in position {\bf Cg}.

We note that $F+(1,-1)$ is a $6^+$-frame. If $F+(1,-1)$ is a $6$-frame and if it is the only $6$-frame within the $2$-ball of $F$, then, one can check that $F$ is $1$-poor and that the codewords around $F$ are fixed as in Figure \ref{fig:1poor}, and so, we are done. Otherwise, either $F+(1,-1)$ is a $7^+$-frame and then $F+(1,-2)$ is a $6^+$-frame, or $F+(1,-1)$ is a $6$-frame and there exists another $6^+$-frame within the $2$-ball of $F$. In both cases we are done.

\vspace{0.3cm}
{\bf Case $D$.} By Observation~\ref{obs:NC}, $F+(0,1)$ is a $6^+$-frame. By Condition 1, there is a codeword among positions {\bf Bc} and {\bf Bf}. Then, by Observation~\ref{obs:NC}, $F+(0,2)$ is a $6^+$-frame.\end{proof}

\begin{figure}[h]
\begin{center}
\begin{tikzpicture}[scale=0.4]
\begin{scope}
\clip (-4.7,-5.7) rectangle (4.7,3.7);
\draw[rotate = 45,mygrid] (-8.9,-8.9) grid [xstep=1.414,ystep =1.414] (6.9,6.9);
\draw[shift = {(1,0)}, rotate = 45, mygrid] (-7.9,-6.9) grid [xstep=1.414,ystep =1.414] (6.9,6.9);
\draw[mygrid] (-4.9,-5.9) grid (4.9,3.9);
\foreach \I in {-4,...,3}\foreach \J in {-4,...,3}
	{\node[gridnode](\I\J) at (\I,\J) {};}
\end{scope}

\foreach \I / \A in {3/A, 2/B, 1/C, 0/D, -1/E, -2/F, -3/G, -4/H}
	{\node[labelnode] at (-5.2,\I) {\A};}
\foreach \I / \A in {3/a, 2/b, 1/c, 0/d, -1/e, -2/f, -3/g, -4/h}
	{\node[labelnode2] at (-1-\I,4.2) {\A};}

\begin{scope}[even odd rule, shift ={(1,-1)}]
\def\mypath{(-2.5,-2.5) rectangle (1.5,1.5)  (-1.5,-1.5) rectangle (0.5,0.5)}
\draw[black] \mypath;
\pattern[pattern color=roug!20, pattern=north east lines] \mypath;
\end{scope}

\begin{scope}[even odd rule]
\filldraw[patternus] (-2.5,-2.5) rectangle (1.5,1.5)
 (-1.5,-1.5) rectangle (0.5,0.5);
\end{scope}

\begin{scope}[shift={(-1,-3)}]
\draw[draw=black!40] (-2.5,-2.5) rectangle (1.5,1.5)
 (-1.5,-1.5) rectangle (0.5,0.5);
\end{scope}
\begin{scope}[shift={(3,1)}]
\draw[draw=black!40] (-2.5,-2.5) rectangle (1.5,1.5)
 (-1.5,-1.5) rectangle (0.5,0.5);
\end{scope}

\begin{scope}[even odd rule]
\filldraw[patternus] (-2.5,-2.5) rectangle (1.5,1.5)
 (-1.5,-1.5) rectangle (0.5,0.5);
\end{scope}

\foreach \pos in {(-4,-4),(-4,-1),(-4,1),(-4,3), (-3,-4), (-3,-3),(-3,0),(-3,2),(-2,-2),(-2,1),(-2,3),(-1,-3),(-1,-1),(-1,2),(0,-4),(0,-2),(0,0),(0,3),(1,-3),(1,-1),(1,1),(2,-2),(2,0),(2,2),(3,-4),(3,-1),(3,2),(3,3)}
\node[code] at \pos {};

\begin{scope}[shift={(-1,-0.5)}]
\draw[patternus] (-6,-7) rectangle (-5,-6);
\node[anchor=mid] at (-1.8,-6.5) {$1$-poor frame $\bf F$};
\end{scope}
\begin{scope}[shift={(2.5,-1.8)}]
\draw[pattern color=roug!20, pattern=north east lines] (-6,-7) rectangle (-5,-6);

\node[anchor=mid] at (-1.5,-6.5) {$1$-poor benefactor};
\end{scope}

\begin{scope}[shift={(9,-7)}]
\node at (4,0) {$X$: not a $1$-poor frame};
\node at ( 4,-1) {$?$ : maybe a $1$-poor frame};
\end{scope}

\begin{scope}[shift={(7,-0.5)}]
\draw[draw=black!40] (-6,-7) rectangle (-5,-6);
\node[anchor=mid] at (-3,-6.5) {$6^+$-frame};
\end{scope}

\begin{scope}[shift={(12,-1)}, scale =1.2]
\draw[mygrid] (-3,-3) grid (4,4);
\draw[patternus] (0,0) rectangle (1,1);
\draw[line width=2pt] (-2,-2) rectangle (3,3);
\draw[pattern color=roug!20, pattern=north east lines] (1,-1) rectangle (2,0);
\node at (0.5,0.5) {\Large $\bf F$};
\node at (1.5,-0.5) {$6$};
\node at (3.5,1.5) {$6^+$};
\node at (-0.5,-2.5) {$6^+$};
\node at (0.5,-1.5) {$X$};
\node at (2.5,0.5) {$X$};
\node at (2.5,-1.5) {$?$};
\end{scope}
\end{tikzpicture}
\end{center}
\caption{\label{fig:1poor} Neighbourhood of a $1$-poor frame in the vertex lattice (on the left) and in the frame lattice (on the right).}
\end{figure}
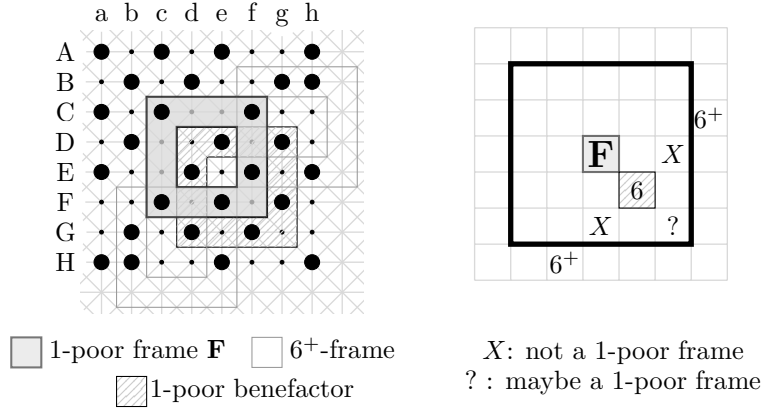

\begin{lemma}\label{lem:1poor}
If $F$ is a $1$-poor frame, then $F$ has a $6$-frame  $F_6$ at distance~$1$ such that:
\begin{itemize}
\item $F_6$ has at most two $1$-poor frames in its $2$-ball and each of them is at distance~$1$ of $F_6$,
\item $F_6$ has at least two  $6^+$-frames at distance~$2$,
\item $F_6$ is not a $4$-benefactor.
\end{itemize}
\end{lemma}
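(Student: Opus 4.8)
The plan is to carry out the whole argument inside the rigid configuration supplied by Lemma~\ref{lem:five}. Since $F$ is $1$-poor, that lemma guarantees that, up to one of the eight symmetries of the grid, the codewords at distance at most~$2$ from $F$ are exactly those of Figure~\ref{fig:1poor}; I fix coordinates so that $F$ sits at the origin with codewords $(-2,1),(-2,-2),(0,-2),(1,1),(1,-1)$ (the case $C_3$ of Lemma~\ref{lem:five}). The unique $6$-frame at distance~$1$ promised by $1$-poorness is then read off the figure: it is $F_6=F+(1,-1)$, whose six codewords are $(-1,-3),(-1,-1),(0,0),(1,-3),(2,-2),(2,0)$. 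Whenever the proof needs a codeword that is not literally drawn in Figure~\ref{fig:1poor} (that is, on the outer boundary of the displayed region), I will produce it by applying Conditions~1 and~2 to a suitable frame.

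I would first dispatch the two local properties. For the third item, the four frames $F_6+(2,0)$, $F_6+(-2,0)$, $F_6+(0,2)$, $F_6+(0,-2)$ are $F+(3,-1)$, $F+(-1,-1)$, $F+(1,1)$, $F+(1,-3)$, and each contains a codeword which is not one of its corners (for instance $(1,-1)$, $(-1,-3)$, $(1,-1)$, $(-1,-3)$ respectively). By Lemma~\ref{lem:fourcorners} a $4$-frame has all its codewords in corners, so none of these four frames is a $4$-frame, and hence $F_6$ is not a $4$-benefactor. For the second item, I exhibit $F_6+(2,2)=F+(3,1)$ and $F_6+(-2,-2)=F+(-1,-3)$, both at distance~$2$ from $F_6$. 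Each already contains five codewords of Figure~\ref{fig:1poor}; moreover each has a full side lying outside the drawn region (the column $x=4$ for the first, the row $y=-5$ for the second), and Condition~2 applied to the two consecutive triples of that side forces an extra codeword there. This sixth codeword makes both frames $6^+$-frames, giving the two required $6^+$-frames at distance~$2$.

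The heart of the proof, and where I expect the real work, is the first item. One admissible $1$-poor frame is $F$ itself, at distance~$1$ from $F_6$; the only other candidate allowed by the statement is $F_6+(1,0)=F+(2,-1)$, again at distance~$1$ (the frame marked with a question mark in Figure~\ref{fig:1poor}). So I must show that no frame of the $2$-ball of $F_6$ other than these two is $1$-poor, and in particular that none at distance exactly~$2$ from $F_6$ is $1$-poor. The leverage is again rigidity: by Lemma~\ref{lem:five}, any $1$-poor frame $F'$ forces around itself its own symmetric copy of the pattern of Figure~\ref{fig:1poor}. Since such an $F'$ lies within distance~$2$ of $F_6$, hence within distance~$3$ of $F$, the copy forced by $F'$ overlaps the pattern of $F$ on a large set of vertices and must agree with it there. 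Running through the finitely many relative positions of $F'$ in the $2$-ball of $F_6$ and the eight orientations, this compatibility contradicts the fixed codewords of $F$ in every case except $F'=F$ and $F'=F+(2,-1)$, which also shows that each surviving candidate is at distance~$1$ from $F_6$.

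The main obstacle is precisely this last compatibility analysis: it is a lengthy, symmetry-aware case check, and to make the overlaps well defined it first requires extending the configuration of Figure~\ref{fig:1poor} to a large enough region by repeated use of Conditions~1 and~2, exactly as for the two local properties. The delicate part is the bookkeeping near the boundary of the determined region, where codewords are only forced rather than drawn; once each relevant frame in the $2$-ball of $F_6$ has a determined codeword count, the classification of the $1$-poor frames, and hence all three items, follow.
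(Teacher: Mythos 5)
Your treatment of the second and third bullets is correct and concrete: exhibiting a non-corner codeword in each of $F_6+(2,0)$, $F_6+(-2,0)$, $F_6+(0,2)$, $F_6+(0,-2)$ rules them out as $4$-frames via Lemma~\ref{lem:fourcorners}, and the five fixed codewords of $F_6+(2,2)$ and $F_6+(-2,-2)$ plus a codeword forced by Condition~2 on the undrawn side make them $6^+$-frames; this matches the paper, which gets the same two frames from Observation~\ref{obs:NC} and from the fact that a $4$-benefactor must have two non-codeword corners in a common line or column. The genuine gap is in the first bullet, which is the heart of the lemma. You reduce it to a ``lengthy, symmetry-aware case check'' that you never carry out, and the outcome you assert for it is demonstrably wrong: the frame marked ``?'' in Figure~\ref{fig:1poor} is $F_6+(1,-1)=F+(2,-2)$, not $F_6+(1,0)=F+(2,-1)$. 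In your coordinates $F+(2,-1)$ has codewords $(0,0),(2,0),(0,-2),(3,-1),(1,-3)$, of which only $(0,0)$ is a corner; since by Lemma~\ref{lem:five} every $1$-poor frame carries, up to symmetry, the configuration of Figure~\ref{fig:1poor} and therefore has three corner codewords, $F+(2,-1)$ can never be $1$-poor. Conversely $F+(2,-2)$, whose corners $(3,-1),(0,-4),(3,-4)$ are all codewords, is exactly the candidate that a correct check cannot eliminate. So the single assertion that carries the first bullet is false as stated, which strongly suggests the case analysis was guessed rather than performed; the proof is incomplete precisely where the work lies.

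What you miss is the observation that makes the check essentially trivial, and which is the paper's actual argument: if $F'$ is any $1$-poor frame in the $2$-ball of $F_6$, then $F_6$ is a $6$-frame lying in the $2$-ball of $F'$, and the rigid pattern of Figure~\ref{fig:1poor} applied to $F'$ shows that the unique $6^+$-frame in the $2$-ball of a $1$-poor frame sits at a \emph{corner of its $1$-ball}. Hence $F'\in\{F_6+(\pm 1,\pm 1)\}$, which already proves the distance-$1$ claim with no sweep over twenty-four positions and eight orientations; it then remains only to exclude $F_6+(1,1)=F+(2,0)$ and $F_6+(-1,-1)=F+(0,-2)$ (the positions marked ``X''), which is immediate since each has a single corner codeword among the fixed codewords of the figure. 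Note also that your preliminary step of ``extending the configuration of Figure~\ref{fig:1poor} by repeated use of Conditions~1 and~2'' is not well defined: those conditions only force a codeword to exist somewhere in a triple or a corner set, not its exact position, so any compatibility argument must compare the two rigid patterns of $F$ and $F'$ on the overlap of their determined $8\times 8$ regions rather than on an ``extended'' configuration.
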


\begin{proof}
By Lemma~\ref{lem:five}, if $F$ is a $1$-poor frame, the neighbourhood of $F$ is fixed and depicted in Figure \ref{fig:1poor} (up to symmetry). The only $6^+$-frame in the neighbourhood of $F$ is $F_6=F+(1,-1)$ and contains exactly six codewords. The first point of the lemma follows from the fact that a $1$-poor frame must have a $6$-frame in a corner of its $1$-ball, however by using the left part of Figure \ref{fig:1poor} the positions marked by "X" on the right part of the figure cannot host a $1$-poor frame. The second point follows using Observation \ref{obs:NC} on the left part of Figure \ref{fig:1poor}. The last point follows by noting that a $4$-benefactor must have two non-codeword corners in the same column or line, which is not the case here.
\end{proof}

A frame playing the role of $F_6$ in the previous lemma is called a {\em $1$-poor-benefactor}.

\begin{figure}[h]
\begin{center}
\begin{tikzpicture}[scale=0.4]
\begin{scope}
\clip (-5.7,-5.7) rectangle (4.7,3.7);
\draw[rotate = 45,mygrid] (-8.9,-8.9) grid [xstep=1.414,ystep =1.414] (8.9,8.9);
\draw[shift = {(1,0)}, rotate = 45, mygrid] (-8.9,-8.9) grid [xstep=1.414,ystep =1.414] (8.9,8.9);
\draw[mygrid] (-5.9,-5.9) grid (4.9,3.9);
\foreach \I in {-4,...,3}\foreach \J in {-4,...,3}
	{\node[gridnode](\I\J) at (\I,\J) {};}
\end{scope}

\foreach \I / \A in {3/A, 2/B, 1/C, 0/D, -1/E, -2/F, -3/G, -4/H}
	{\node[labelnode] at (-6.2,\I) {\A};}
\foreach \I / \A in {3/a, 2/b, 1/c, 0/d, -1/e, -2/f, -3/g, -4/h}
	{\node[labelnode2] at (-1-\I,4.2) {\A};}
	
\begin{scope}[shift={(0,-2)}]
\draw[draw=black!40] (-2.5,-2.5) rectangle (1.5,1.5);
\end{scope}
\begin{scope}[shift={(-3,-1)}]
\draw[draw=black!40] (-2.5,-2.5) rectangle (1.5,1.5);
\end{scope}
\begin{scope}[shift={(-3,-2)}]
\draw[draw=black!40] (-2.5,-2.5) rectangle (1.5,1.5);
\end{scope}
\begin{scope}[shift={(-1,-3)}]
\draw[draw=black!40] (-2.5,-2.5) rectangle (1.5,1.5);
\end{scope}
\begin{scope}[even odd rule, shift ={(-2,-1)}]
\def\mypath{(-2.5,-2.5) rectangle (1.5,1.5)  (-1.5,-1.5) rectangle (0.5,0.5)}
\draw[black] \mypath;
\pattern[pattern color=roug!20, pattern=north east lines] \mypath;
\end{scope}

\begin{scope}[even odd rule]
\filldraw[patternus] (-2.5,-2.5) rectangle (1.5,1.5)
 (-1.5,-1.5) rectangle (0.5,0.5);
\end{scope}

\foreach \pos in {(-4,-3),(-4,-1),(-4,1),(-4,3),(-3,-4),(-3,-3),(-3,0),(-3,2),(-2,-3),(-2,-2),(-2,1),(-2,3),(-1,-4),(-1,-1),(-1,2),(0,-2),(0,0),(0,3),(1,-3),(1,-1),(1,1),(2,-4),(2,-2),(2,0),(2,2),(3,-4),(3,-1),(3,2),(3,3)}
\node[code] at \pos {};

\begin{scope}[shift={(-4.5,-0.5)}]
\draw[patternus] (-6,-7) rectangle (-5,-6);
\node[anchor=mid] at (-1.9,-6.5) {$2$-poor frame $\bf F$};
\end{scope}
\begin{scope}[shift={(8.2,-0.5)}]
\draw[pattern color=roug!20, pattern=north east lines] (-6,-7) rectangle (-5,-6);
\node[anchor=mid] at (-1.3,-6.5) {$2$-poor-benefactor};
\end{scope}
\begin{scope}[shift={(3.2,-0.5)}]
\draw[draw=black!40] (-6,-7) rectangle (-5,-6);
\node[anchor=mid] at (-3,-6.5) {$6^+$-frame};
\end{scope}
\begin{scope}[shift={(19.2,-0.5)}]
\node[anchor=mid] at (-4,-6.5) { $X$: not a $2$-poor frame};
\end{scope}

\begin{scope}[shift={(12,-1)}, scale =1.2]
\draw[mygrid] (-3,-3) grid (4,4);
\draw[patternus] (0,0) rectangle (1,1);
\draw[line width=2pt] (-2,-2) rectangle (3,3);
\draw[pattern color=roug!20, pattern=north east lines] (-2,-1) rectangle (-1,0);
\node at (0.5,0.5) {\Large $\bf F$};
\node at (-1.5,-0.5) {$6$};
\node at (0.5,-1.5) {$6$};
\node at (-2.5,-0.5) {$6^+$};
\node at (-2.5,-1.5) {$6^+$};
\node at (-0.5,-2.5) {$6^+$};
\node at (-0.5,1.5) {$X$};
\node at (-2.5,1.5) {$X$};
\end{scope}
\end{tikzpicture}
\end{center}
\caption{\label{fig:2poor} Neighbourhood of  a $2$-poor frame in the vertex lattice (on the left) and in the frame lattice (on the right).}
\end{figure}
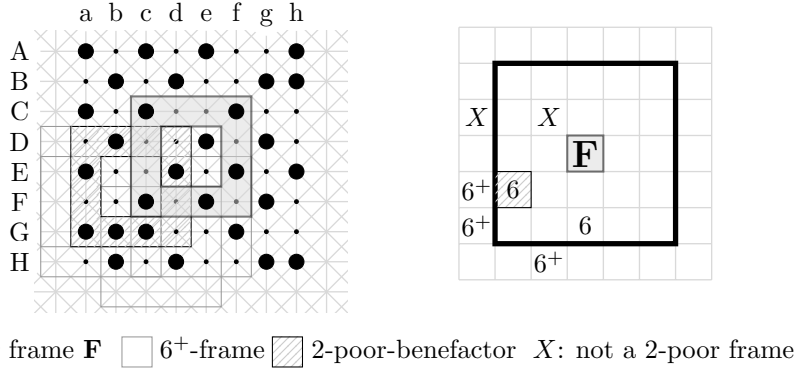

\begin{lemma}\label{lem:2poor}
If $F$ is a $2$-poor frame, then $F$ has two $6$-frames at distance~$2$. One of them, say $F_6$, has the following properties:
\begin{itemize}
\item $F_6$ is neither a $4$-benefactor nor a $1$-poor-benefactor,
\item $F_6$ has at least four $6^+$ frames in its $2$-ball,
\item $F_6$ has only one $2$-poor frame in its $2$-ball.
\end{itemize}
\end{lemma}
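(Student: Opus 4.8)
The plan is to exploit the rigidity provided by Lemma~\ref{lem:five}: since $F$ is $2$-poor, its whole neighbourhood is fixed up to symmetry, so I may assume that the codewords inside the $2$-ball of $F$ are exactly those drawn in the left part of Figure~\ref{fig:2poor}. Reading that picture off, the two frames at distance~$2$ from $F$ that are not $5$-frames are precisely the two $6$-frames $F+(-2,-1)$ and $F+(0,-2)$; this already gives the first sentence of the statement. I then set $F_6:=F+(-2,-1)$ (the hatched $2$-poor-benefactor of Figure~\ref{fig:2poor}) and read from the fixed configuration that $F_6$ is a $6$-frame carrying exactly one corner codeword. It remains to verify the three bullet points, the first two of which are essentially local to $F_6$.

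For the second bullet I would exhibit four explicit $6^+$-frames inside the $2$-ball of $F_6$: the frames $F+(-3,-1)$ and $F+(-3,-2)$ at distance~$1$, together with $F+(-1,-3)$ and the second $6$-frame $F+(0,-2)$ at distance~$2$. Each of the first three already contains four or five known codewords and reaches a single ``outer'' side lying outside the region pinned down by Lemma~\ref{lem:five}; on that outer side I would propagate Conditions 1 and 2, which typically forces one or even two further codewords there (for instance, when the only possible corner codeword must sit on the outer side, Condition 1 and Condition 2 together force two codewords on it). This raises each count to at least $6$, so all four are indeed $6^+$.

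For the first bullet, to see that $F_6$ is not a $4$-benefactor I would inspect its four axis-neighbours $F+(-2,1)$, $F+(0,-1)$, $F+(-4,-1)$ and $F+(-2,-3)$: the first two already display five codewords in the known region, while the last two each contain a non-corner codeword, so by Lemma~\ref{lem:fourcorners} none of them is a $4$-frame. To rule out that $F_6$ is a $1$-poor-benefactor I would use the cleanest available invariant: by Lemma~\ref{lem:1poor} and the rigid Figure~\ref{fig:1poor}, every $1$-poor-benefactor is, up to symmetry, a $6$-frame carrying exactly two corner codewords (on opposite corners), whereas $F_6$ has exactly one corner codeword; since the number of corner codewords is preserved by the symmetries of a frame, $F_6$ cannot be a $1$-poor-benefactor.

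The real work is the third bullet: showing that $F$ is the only $2$-poor frame inside the $2$-ball of $F_6$. The difficulty is that this $2$-ball, namely the frames $F+(a,b)$ with $-4\le a\le 0$ and $-3\le b\le 1$, reaches distance~$4$ from $F$, hence outside the zone fixed by Lemma~\ref{lem:five}. I would split the candidates into two families. For the frames lying entirely within distance~$2$ of $F$ (those with $-2\le a\le 0$ and $-2\le b\le 1$), the codeword count and the types of the neighbouring frames are read off Figure~\ref{fig:2poor}, and one checks that every such $5$-frame has either a $6^+$-frame at distance~$1$ or at least three $6$-frames at distance~$2$, so it fails to be $2$-poor. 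For the boundary frames (those with $a\in\{-4,-3\}$ or $b=-3$), only part of the neighbourhood is known; there I would argue that if such a frame $F'$ were $2$-poor, its own configuration would be forced by Lemma~\ref{lem:five} and would have to be compatible with the codewords already fixed around $F$, and I would reach a contradiction either by producing, via Observations~\ref{obs:NC} and~\ref{obs:CO}, a $6^+$-frame at distance~$1$ from $F'$, or by forcing an extra $6^+$-frame into the $2$-ball of $F$, contradicting the $2$-poorness of $F$. The main obstacle is precisely this last family: keeping the boundary case analysis finite and free of omissions while exploiting the symmetry is where the bulk of the care is needed, the rest reducing to bookkeeping on the fixed configuration of Figure~\ref{fig:2poor}.
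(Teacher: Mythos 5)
Your overall route is the same as the paper's: use the rigidity given by Lemma~\ref{lem:five} to fix the whole $2$-ball of $F$ as in Figure~\ref{fig:2poor}, set $F_6=F+(-2,-1)$, exhibit the four $6^+$-frames $F+(-3,-1)$, $F+(-3,-2)$, $F+(-1,-3)$, $F+(0,-2)$, and rule out the two benefactor types. Those parts are sound; in fact your exclusion of the $1$-poor-benefactor case via the corner-codeword count (a $1$-poor-benefactor has two opposite corner codewords by the rigidity of Figure~\ref{fig:1poor}, while $F_6$ has exactly one) is a clean alternative to the paper's argument, and your direct inspection of the four axis-neighbours of $F_6$ settles the $4$-benefactor case correctly.

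The genuine gap is in the third bullet, and it falls exactly on the hard case. Your dichotomy for the ``inner'' candidates --- that every $5$-frame $F+(a,b)$ with $-2\le a\le 0$, $-2\le b\le 1$ has either a $6^+$-frame at distance~$1$ or at least three $6$-frames at distance~$2$ --- is false. Consider $F'=F+(-1,1)$, which is inner in your split: reading Figure~\ref{fig:2poor}, all eight frames at distance~$1$ from $F'$ are $5$-frames, and the only $6^+$-frames at distance~$2$ from $F'$ are $F+(-2,-1)$ and $F+(-3,-1)$, since $F+(0,-2)$, $F+(-3,-2)$ and $F+(-1,-3)$ all lie at distance at least~$3$ from $F'$. (The frame $F+(-2,1)$ is a second counterexample.) So counting alone cannot eliminate $F'$; this is precisely why the paper singles out $F+(-1,1)$ and $F+(-3,1)$ as the ``X'' positions of Figure~\ref{fig:2poor} and disposes of them by a different argument: if such a frame were $2$-poor, Lemma~\ref{lem:five} would force its $2$-ball into the rigid pattern of Figure~\ref{fig:2poor}, in which the two $6$-frames sit, up to symmetry, at relative positions $\{(-2,-1),(0,-2)\}$ --- in particular at Chebyshev distance~$2$ from each other and never at a corner of the $2$-ball --- whereas the two $6^+$-frames seen by $F'$ are adjacent to each other and one of them, $F+(-3,-1)$, occupies the corner position $(-2,-2)$ of the $2$-ball of $F'$. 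You do have this rigidity-compatibility tool in your proposal, but you reserve it for the boundary family $a\in\{-4,-3\}$ or $b=-3$; as written, your inner-family check would wrongly certify $F+(-1,1)$, and the proof breaks there. The repair is to run the matching argument on every candidate that survives the counting test, not only on boundary frames; note also that the boundary analysis itself is still only a sketch, as you acknowledge.
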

\begin{proof}
By Lemma~\ref{lem:five}, if $F$ is a $2$-poor frame, the neighbourhood of $F$ is fixed and depicted in Figure \ref{fig:2poor} (up to symmetry). Let $F_6=F+(-2,-1)$. The first point of the lemma follows from the fact that, by Lemma \ref{lem:five}, a $1$-poor benefactor must have a $6$-frame in its $2$-ball (and only this $6$-frame in its $2$-ball) and that a $4$-benefactor must have two non-codeword corners in the same line or column (hence, the only possibility would be that $F+(-2,1)$ is a $4$-frame which is not true). The second point follows by using Observation \ref{obs:NC} on the left part of Figure \ref{fig:2poor}. For the third point, observe that the only possible locations for another $2$-poor frame in the $2$-ball of $F_6$ are those marked by an "X" on the right part of Figure \ref{fig:2poor}: $F+(-3,1)$ and $F+(-1,1)$. However, both have two $6^+$-frames in their $2$-ball that do not match the codeword configuration of a $2$-poor frame as given on the left part of Figure \ref{fig:2poor}.
\end{proof}

A frame playing the role of $F_6$ in the previous lemma is called a {\em $2$-poor-benefactor}.

\section{The discharging procedure}

We are now ready to describe the discharging procedure which leads to our result.
In the beginning, each $k$-frame has charge $k$.
Let $\alpha=\tfrac{1}{37}$. We apply the following rules:

\begin{enumerate}
\item Let $F$ be a $4$-benefactor $6$-frame without any $7^+$-frame among its co-benefactors.
Assume $F$ is oriented as in Figure~\ref{fig:4benef} (other cases follow by symmetry). 
We consider two subrules:
\begin{enumerate}
\item If there is a $6^+$-frame in a $\Y$-position of Figure~\ref{fig:4benef}, say in position $F+(-1,1)$ (the other case will be covered by symmetry), and no other $6^+$-frames in $\Y$- and $\Z$-positions of Figure~\ref{fig:4benef}, then $F$ gives charge $\tfrac{3\alpha+1}{4}$ to the $4$-frame  $F+(-2,0)$, charge $\alpha$ to frame $F+(0,1)$, charge $2\alpha $ to the other $5$-frames in $\Z$-positions of Figure~\ref{fig:4benef}, charge $\tfrac{3\alpha}{2}$ to the other  $5$-frame in $\Y$-position of Figure~\ref{fig:4benef} and charge $\alpha$ to all the other $5$-frames in the $2$-ball of $F$.

\item Otherwise, $F$ gives charge $\tfrac{3\alpha+1}{4}$ to the $4$-frame  $F+(-2,0)$, charge $2\alpha $ to $5$-frames in $\Z$-positions of Figure~\ref{fig:4benef}, charge $\tfrac{3\alpha}{2}$ to $5$-frames in $\Y$-positions of Figure~\ref{fig:4benef} and charge $\alpha $ to all the other $5$-frames in the $2$-ball of $F$.
\end{enumerate}

\item Let $F$ be a $4$-benefactor $6$-frame with a $7^+$-frame among its co-benefactors. Assume $F$ is oriented as in Figure~\ref{fig:4benef}, and that $F+(-2,2)$ is a $7^+$-frame  (other cases follow by symmetry).
Then $F$ gives the same charges than in Rule 1b, except for the frame $F+(-1,1)$ which receives no charge from $F$.

\item A $4$-benefactor $7^+$-frame $F$ gives charge $\tfrac{3\alpha+1}{4}$ to the $4$-frames among  $F+\{(0,2), (0,-2), (2,0), (-2,0)\}$, charge $3\alpha $ to frames at distance~$1$ and charge $2\alpha$ to frames at distance~$2$.

\item A $1$-poor-benefactor, recall that it is a $6$-frame, gives charge $3\alpha $ to the $1$-poor frames in its $2$-ball, $2\alpha $ to the other $5$-frames at distance~1 and charge $\alpha $ to the other $5$-frames at distance~2.
\item A $2$-poor-benefactor, recall that it is a $6$-frame, gives charge $2\alpha $ to the unique $2$-poor frame in its $2$-ball, $2\alpha $ to the other $5$-frames at distance~1 and charge $\alpha $ to the other $5$-frames at distance~2.
\item Other $6$-frames give charge $2\alpha $ to $5$-frames at distance~1 and charge $\alpha $ to $5$-frames at distance~2.
\item Other $7^+$-frames give charge $3\alpha $ to $5$-frames at distance~1 and charge $2\alpha $ to $5$-frames at distance~2.
\end{enumerate}

We note that the rules are not ambiguous. Indeed, by Lemmas~\ref{lem:1poor} and~\ref{lem:2poor}, a $6$-frame can be either a $4$-benefactor, a $1$-poor benefactor, a $2$-poor benefactor or not a benefactor at all, but never two at the same time.

\begin{lemma}\label{lem:d7}
After the application of the discharging rules, each $7^+$-frame has charge at least $5+3\alpha $.
\end{lemma}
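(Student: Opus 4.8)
The plan is to observe first that a $7^+$-frame only ever emits charge and never receives any: scanning the seven rules, every recipient is a $4$-frame or a $5$-frame, so a frame with at least seven codewords is excluded from being a recipient. Hence its final charge equals its initial charge (at least $7$) minus the total it sends out, and it suffices to show that a $7^+$-frame sends out at most $7-(5+3\alpha)=2-3\alpha=\tfrac{71}{37}$.

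Next I would record the available budget. Since $F$ gives charge only to frames in its $2$-ball, the relevant recipients lie among the $8$ frames at distance $1$ and the $16$ frames at distance $2$ from $F$. Exactly one rule applies to $F$, selected by whether $F$ is a $4$-benefactor. If it is not, Rule~7 applies and $F$ sends at most $3\alpha$ to each distance-$1$ frame and $2\alpha$ to each distance-$2$ frame, giving the crude total $8\cdot3\alpha+16\cdot2\alpha=56\alpha<\tfrac{71}{37}$; this case closes immediately.

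If $F$ is a $4$-benefactor, Rule~3 applies and, by Lemma~\ref{lem:4benef}, $F$ has either one or two $4$-frames among $F+\{(0,2),(0,-2),(2,0),(-2,0)\}$, each of which receives $\tfrac{3\alpha+1}{4}$, while every other recipient gets at most $3\alpha$ (distance $1$) or $2\alpha$ (distance $2$). With a single axis $4$-frame the crude count is $\tfrac{3\alpha+1}{4}+15\cdot2\alpha+8\cdot3\alpha=\tfrac{64}{37}<\tfrac{71}{37}$, which again suffices directly.

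The hard part is the remaining case, where $F$ has two axis $4$-frames: the naive estimate $2\cdot\tfrac{3\alpha+1}{4}+14\cdot2\alpha+8\cdot3\alpha=\tfrac{72}{37}$ already overshoots $\tfrac{71}{37}$, so a purely counting argument is insufficient and structural input is essential. Here I would invoke Lemma~\ref{lem:4benef} a second time: two opposite axis positions cannot both be $4$-frames (otherwise no corner of $F$ would be a codeword), so the two $4$-frames are adjacent, say $F+(-2,0)$ and $F+(0,2)$, and then the three corner positions $F+(-2,2)$, $F+(-2,-2)$ and $F+(2,2)$ of the $2$-ball are forced to be $6^+$-frames. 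Since these receive nothing, the estimate drops by $3\cdot2\alpha=6\alpha$ to $\tfrac{72}{37}-6\alpha=\tfrac{66}{37}\le\tfrac{71}{37}$. Collecting the three cases shows that $F$ emits at most $2-3\alpha$, which proves the lemma.
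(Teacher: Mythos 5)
Your proposal is correct and follows essentially the same route as the paper: reduce to bounding the emitted charge by $2-3\alpha$, split into the cases ``not a $4$-benefactor'', ``one axis $4$-frame'', and ``two axis $4$-frames'', and in the critical last case use Lemma~\ref{lem:4benef} to force three $6^+$-frames in corner positions of the $2$-ball, cutting the total to $\tfrac{66}{37}$ exactly as the paper does. The only (harmless) deviations are that in the single-$4$-frame case you use the cruder bound $\tfrac{64}{37}$ where the paper exploits the two co-benefactors to get $\tfrac{60}{37}$, and that you re-derive the adjacency of the two $4$-frames, which the paper subsumes in its direct claim of ``at least three co-benefactors.''
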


\begin{proof}
Let $F$ be a  $7^+$-frame. It is sufficient to show that $F$ gives at most charge $2-3\alpha=\tfrac{71}{37}$.

If $F$ is a $4$-benefactor, then by Lemma~\ref{lem:4benef}, it has at most two $4$-frames among the frames $F+\{(0,2), (0,-2), (2,0), (-2,0)\}$. 

If $F$ has two $4$-frames among  $F+\{(0,2), (0,-2), (2,0), (-2,0)\}$ then it has at least three co-benefactors. Hence, $F$ has at most eleven $5$-frames at distance~$2$. By Rule 3, $F$ gives at most charge $2\cdot \tfrac{3\alpha+1}{4} + 8\cdot 3\alpha + 11\cdot 2\alpha = \tfrac{66}{37}$. 

If $F$ has only one $4$-frame among $F+\{(0,2), (0,-2), (2,0), (-2,0)\}$, then it has two co-benefactors and hence, at most thirteen $5$-frames at distance~$2$. By Rule 3, $F$ gives at most charge $\tfrac{3\alpha+1}{4} + 8\cdot 3\alpha + 13\cdot 2\alpha = \tfrac{60}{37}$.

Finally, if $F$ is not a $4$-benefactor, then by Rule 7, it gives at most charge $8\cdot 3\alpha + 16 \cdot 2\alpha = \tfrac{56}{37}$.
\end{proof}

\begin{lemma}\label{lem:d6}
After the application of the discharging rules, each $6$-frame has charge at least $5+3\alpha $.
\end{lemma}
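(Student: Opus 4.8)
The plan is to reduce the statement to a charge-counting inequality and then dispatch the cases by how much charge a $6$-frame sends out. The first observation is that a $6$-frame is a $6^+$-frame, and every rule sends charge only to $4$- and $5$-frames; hence a $6$-frame \emph{receives} nothing, and its final charge is $6$ minus the charge it gives away. Writing $1=37\alpha$, the target $5+3\alpha$ is equivalent to giving away at most $6-(5+3\alpha)=34\alpha$. By Lemmas~\ref{lem:1poor} and~\ref{lem:2poor} and the remark following the rules, each $6$-frame is governed by exactly one of Rules~1,~2,~4,~5,~6, so I would bound the charge given under each. Throughout I would use $\tfrac{3\alpha+1}{4}=10\alpha$ and think of the charge a position would receive as a $5$-frame as its weight, so that a position which is actually a $6^+$-frame produces a saving equal to that weight.

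The three non-$4$-benefactor cases are the warm-up. A non-benefactor $6$-frame (Rule~6) gives at most $8\cdot 2\alpha+16\cdot\alpha=32\alpha$; a $2$-poor-benefactor (Rule~5) gives at most $2\alpha+8\cdot2\alpha+15\cdot\alpha=33\alpha$; and for a $1$-poor-benefactor (Rule~4) I would feed in Lemma~\ref{lem:1poor} (at most two $1$-poor frames, both at distance~$1$, and at least two $6^+$-frames at distance~$2$) to get at most $2\cdot3\alpha+6\cdot2\alpha+14\cdot\alpha=32\alpha$. Each of these is below $34\alpha$, so these cases need no structural input beyond the poor-frame lemmas.

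For a $4$-benefactor $6$-frame $F$ oriented as in Figure~\ref{fig:4benef}, I would orient so that $F+(-2,0)$ is the unique $4$-frame and the co-benefactors are $F+(-2,\pm2)$, and then use the first bullet of Lemma~\ref{lem:4benef6} (two further $6^+$-frames besides the co-benefactors). Under Rule~1a the relevant weights sum to $10\alpha+\alpha+4\cdot2\alpha+\tfrac{3\alpha}{2}+14\cdot\alpha=\tfrac{69}{2}\alpha$, and since the second forced $6^+$-frame must then lie in an $X$-position this drops to $\tfrac{67}{2}\alpha<34\alpha$. Under Rule~1b the weights sum to $37\alpha$, so I must exhibit savings of at least $3\alpha$; I would split on the positions of the $6^+$-frames in $Y\cup Z$, using that being in Rule~1b (not~1a) forces two $6^+$-frames in $Y\cup Z$ whenever there is one in $Y$, and using the second bullet of Lemma~\ref{lem:4benef6} (whose $7^+$-alternative is excluded because the co-benefactors are $6$-frames) to get three $6^+$-frames in $X$-positions when $Y\cup Z$ carries none.

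The real difficulty is Rule~2, where a co-benefactor $F+(-2,2)$ is a $7^+$-frame and $F+(-1,1)$ is denied charge: the weights then sum to $\tfrac{71}{2}\alpha$, so I need only $\tfrac{3\alpha}{2}$ of savings. If $F+(-1,1)$ is a $5$-frame the two forced $6^+$-frames lie elsewhere and already save $2\alpha$, finishing the case. The obstacle is exactly when $F+(-1,1)$ is itself a $6^+$-frame: it then consumes one of the two guaranteed $6^+$-frames while contributing no saving (its weight is $0$), and a lone remaining $6^+$-frame in an $X$-position saves only $\alpha<\tfrac{3\alpha}{2}$. To close this I would re-enter the case analysis of Lemma~\ref{lem:4benef6} and show that a $7^+$ co-benefactor forces two extra $6^+$-frames in $X$-positions (up to symmetry $F+(-2,-1)$ and $F+(2,-1)$, which appear in the branch of that proof producing a $7$-frame co-benefactor), yielding a saving of $2\alpha$ and hence at most $\tfrac{67}{2}\alpha<34\alpha$. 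Establishing that these extra $6^+$-frames are always present when $F+(-1,1)$ is dense is the one step where the full combinatorial analysis behind Lemma~\ref{lem:4benef6} is indispensable, and is where I expect to spend the bulk of the effort.
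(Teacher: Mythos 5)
Your reduction to ``a $6$-frame must give away at most $34\alpha$'' and your treatment of Rules 4, 5, 6 and of Rules 1a/1b are correct and essentially identical to the paper's proof (the paper gets $\tfrac{32}{37}$, $\tfrac{29}{37}$, $\tfrac{33.5}{37}$ and the savings-of-$3\alpha$ enumeration where you get the same or slightly coarser but still sufficient bounds). The gap is Rule 2, which you correctly single out as the dangerous case but do not close. The paper disposes of it in one line: it cites Lemma~\ref{lem:4benef6} for the claim that when a co-benefactor is a $7^+$-frame, the two extra $6^+$-frames lie \emph{at distance $2$} from $F$, hence both are $X$-positions, giving savings $2\alpha$ and a total of at most $\tfrac{33.5}{37}$. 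Your proposal replaces this with a claim to be proved (``a $7^+$ co-benefactor forces two extra $6^+$-frames in $X$-positions''), justified only by pointing at the single branch of the proof of Lemma~\ref{lem:4benef6} in which a $7$-frame co-benefactor is exhibited together with extras at $F+(-2,-1)$ and $F+(2,-1)$.

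That justification is not valid. The proof of Lemma~\ref{lem:4benef6} is a decision tree on codeword placements, and the semantic property ``some co-benefactor is a $7^+$-frame'' is not equivalent to the syntactic property ``the configuration falls into the final leaf of that tree.'' A configuration can fall into an earlier branch --- for instance the very first one, where codewords in positions \textbf{Ce}, \textbf{Ec}, \textbf{De} certify that $F+(-1,1)$ is a $6^+$-frame and the second certified extra is the $X$-position $F+(-1,0)$ or $F+(2,1)$ --- and nothing in the hypotheses of that branch prevents the co-benefactor $F+(-2,2)$ from also being a $7^+$-frame. That is exactly your ``obstacle'' configuration: $F+(-1,1)$ has weight $0$ under Rule 2 and the lone remaining certified extra saves only $\alpha$, leaving $34.5\alpha>34\alpha$. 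So closing Rule 2 requires proving a genuinely new structural statement --- e.g.\ that if a co-benefactor is $7^+$ and $F+(-1,1)$ is a $6^+$-frame, then there is additionally a $6^+$-frame in a $Z$-position, or at $F+(-1,-1)$, or at two $X$-positions --- by redoing the case analysis under these hypotheses; this is neither contained in Lemma~\ref{lem:4benef6} as stated nor supplied by your appeal to its proof's branch structure. (To be fair, the paper's own citation of Lemma~\ref{lem:4benef6} at this point also reads the lemma in the converse direction of its second bullet, so you have located a real soft spot; but the paper does assert the needed distance-$2$ claim, whereas your write-up leaves the case open and the route you sketch for it does not work as described.)
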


\begin{proof}
Let $F$ be a  $6$-frame. It is sufficient to show that $F$ gives at most charge $1-3\alpha=\tfrac{34}{37}$.

Assume first that $F$ is a $4$-benefactor without any $7^+$-frame among its co-benefactors.
If there were no other $6^+$-frames than the co-benefactors of $F$ within its $2$-ball, $F$ would give, according to Rule 1, charge $\tfrac{3\alpha+1}{4}+5\cdot 2\alpha + 2\cdot \frac{3\alpha}{2}+14\alpha=\tfrac{37}{37}$.
However, by Lemma~\ref{lem:4benef6}, $F$ has at least two non-co-benefactor $6^+$-frames in its 2-ball. 
Moreover by Lemma \ref{lem:4benef6} as well, either one of those is on a $\Y$- or $\Z$-position, or there is a third non-co-benefactor $6^+$-frame in the $2$-ball of $F$. We distinguish two cases.
If there is an extra $6^+$-frame on a $\Y$-position and one on an $\X$-position (Rule 1a applies), then we save charge at least $\tfrac{3\alpha}{2}+\alpha$ from the extra frames and charge $\alpha$ from frame $F+(0,1)$.
Otherwise (Rule 1b applies), the total charge saved on the extra $6^+$-frames is at least $3\alpha$. Indeed, we save at least $2\cdot \tfrac{3\alpha}{2}$ if the two extra $6^+$-frames are both on $Y$-positions, $2\cdot 2\alpha$ if they are both on $Z$-positions, $2\alpha+\tfrac{3\alpha}{2}$ for a $Z$-position and a $Y$-position, $2\alpha+\alpha$ for a $Z$-position and an $X$-position, and $3\cdot\alpha$ if there are three extra $6^+$-frames.
In all cases, $F$ gives at most $\tfrac{34}{37}$ of charge.

%

If $F$ is a $4$-benefactor with a $7^+$-frame among its co-benefactors, by Lemma~\ref{lem:4benef6}, there are two $6^+$-frames in the $2$-ball of $F$ in addition to its two co-benefactors and they are at distance~$2$ from $F$. Hence, $F$ has at most eleven $5$-frames at distance~$2$. By Rule $2$, $F$ gives at most $\tfrac{3\alpha+1}{4}+5\cdot 2\alpha+\tfrac{3\alpha}{2}+12\alpha = \tfrac{33.5}{37}$.

If $F$ is a $1$-poor-benefactor, by Lemma~\ref{lem:1poor}, $F$ has at most two $1$-poor frames in its $2$-ball and each of them is at distance~1 of $F$. Moreover, there are at least two $6^+$-frames at distance~2 of $F$. We may assume that $F$ has two $1$-poor frames in its $2$-ball since by Rule $4$, $F$ would give away more charge in this case. Now, by Rule $4$, $F$ gives at most $2\cdot 3\alpha + 6 \cdot 2\alpha + 14 \cdot \alpha = \tfrac{32}{37}$.

If $F$ is a $2$-poor-benefactor, by Lemma~\ref{lem:2poor}, $F$ has only one $2$-poor frame and at least four $6^+$-frames in its $2$-ball. We may assume that these frames are at distance~2 of $F$ since by Rule $5$, $F$ would give away more charge in this case. By Rule $5$, $F$ gives at most $2\alpha + 8 \cdot 2\alpha + 11\cdot \alpha = \tfrac{29}{37}$.

Finally, if $F$ is not a benefactor, then by Rule 6, it gives at most charge $8\cdot 2\alpha + 16 \cdot \alpha = \tfrac{32}{37}$.
\end{proof}

\begin{lemma}\label{lem:d5}
After the application of the discharging rules, each $5$-frame has charge at least $5+3\alpha $.
\end{lemma}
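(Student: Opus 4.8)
The plan is to use that a $5$-frame sends no charge under Rules~1--7 (only $6^+$-frames are donors), so its final charge is $5$ plus whatever it receives, and it suffices to show that every $5$-frame $F$ receives at least $3\alpha=\tfrac{3}{37}$. I would isolate two baseline facts first. (a)~Every $7^+$-frame in the $2$-ball of a $5$-frame sends it at least $2\alpha$, directly from Rules~3 and~7. (b)~Every $6$-frame in the $2$-ball of a $5$-frame sends it at least $\alpha$, the only exception being the single slot (the image of $(-1,1)$) zeroed out by Rule~2; but in that case the donor is a $4$-benefactor whose $7^+$ co-benefactor sits at distance~$1$ from $F$ and sends it $3\alpha$ (Rule~3 or~7), so such an $F$ is already finished. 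Hence every $6$-frame that fails to give $\alpha$ is automatically compensated, and I will use ``$\ge\alpha$ from each $6$-frame, $\ge 2\alpha$ from each $7^+$-frame'' freely.

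I would then split along Lemma~\ref{lem:five}. The poor cases are immediate: if $F$ is $1$-poor, Lemma~\ref{lem:1poor} gives a $1$-poor-benefactor at distance~$1$ sending $3\alpha$ (Rule~4); if $F$ is $2$-poor, Lemma~\ref{lem:2poor} gives two $6$-frames at distance~$2$, the $2$-poor-benefactor sending $2\alpha$ (Rule~5) and the other (being at distance~$2$, hence not in the excepted Rule~2 slot) sending $\ge\alpha$, for a total of $3\alpha$. For the charge-excess case (second bullet of Lemma~\ref{lem:five}), $F$ has at least two $6^+$-frames and total excess at least~$3$ in its $2$-ball, and the baseline facts make this a short count: two $7^+$-frames give $\ge 4\alpha$; one $7^+$-frame together with a $6$-frame give $\ge 2\alpha+\alpha=3\alpha$; and excess~$3$ with no $7^+$-frame forces at least three $6$-frames, none of which can occupy an excepted Rule~2 slot (that would drag a $7^+$ co-benefactor into the $2$-ball, contradicting the absence of $7^+$-frames), so they contribute $\ge 3\alpha$.

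The main work, and where I expect the real difficulty, is the first bullet of Lemma~\ref{lem:five}: $F$ has a $6$-frame $G$ at distance~$1$ and a further $6^+$-frame in its $2$-ball. If $G$ is not a $4$-benefactor it already sends $2\alpha$ (Rules~4--6) and the further frame supplies the last $\alpha$. The delicate situation is $G$ a $4$-benefactor under-charging $F$; orienting $G$ as in Figure~\ref{fig:4benef} (co-benefactors at $G+(-2,\pm2)$ by Lemma~\ref{lem:4benef}), the under-charged distance-$1$ slots are the $X$-position $G+(-1,0)$, the reduced $Z$-position $G+(0,1)$ of Rule~1a, and the $Y$-positions $G+(-1,\pm1)$ (the Rule~2 zero slot being dispatched by fact~(b)). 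For $F=G+(-1,0)$ ($\alpha$ from $G$), both co-benefactors lie at distance~$2$ and add $\ge\alpha$ each; for $F=G+(0,1)$ ($\alpha$ from $G$, Rule~1a), the $Y$-position $6^+$-frame forcing that rule lies at distance~$1$ and a co-benefactor at distance~$2$, each adding $\ge\alpha$; and for $F=G+(-1,1)$ ($\tfrac{3\alpha}{2}$ from $G$) the near co-benefactor $C=G+(-2,2)$ lies at distance~$1$ with $F=C+(1,-1)$ sitting on a corner of the distance-$1$ ring of $C$, so $C$ adds at least $\tfrac{3\alpha}{2}$ whatever its orientation (a $Z$-corner giving $2\alpha$, a $Y$-corner giving $\tfrac{3\alpha}{2}$, and the Rule~2 zero corner being rescued by $C$'s own $7^+$ co-benefactor). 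In every slot this reaches $3\alpha$.

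The hardest point is precisely this $Y$-position computation, where the bound is tight at $3\alpha$: one must track the a~priori unknown orientation of the co-benefactor $C$ and check that the fixed relative offset $(1,-1)$ always lands on a corner of its distance-$1$ ring, so that $C$ never under-charges $F$ below $\tfrac{3\alpha}{2}$. Ensuring, via the symmetry reductions, that no under-charged position relative to any $4$-benefactor slips through is the crux of the proof; the rest is the bookkeeping of the contributions tabulated above against the threshold $3\alpha$.
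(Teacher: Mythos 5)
Your proposal is correct and follows essentially the same route as the paper's own proof: the same two baseline facts (each $6$-frame gives at least $\alpha$ except the Rule~2 slot, which is compensated by the $4$-benefactor $7^+$-frame at distance~$1$; each $7^+$-frame gives at least $2\alpha$), the same case split via Lemma~\ref{lem:five} (poor cases handled by Rules~4 and~5, the excess case by the baseline counts), and the same identification of the under-charged positions around a $4$-benefactor $F_6$ (the $\X$-position, the reduced $\Z$-slot of Rule~1a, and the $\Y$-positions). Your explicit treatment of the $\Y$-position via the near co-benefactor $C=F_6+(-2,2)$, which is itself a $4$-benefactor with $F$ on a corner of its distance-$1$ ring, is exactly the content behind the paper's terser assertion that the second $6^+$-frame ``is also a $4$-benefactor'' giving $\tfrac{3\alpha}{2}$.
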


\begin{proof}
It is enough to prove that each $5$-frame receives charge $3\alpha$.

We first note that by our discharging rules, each $6$-frame $F_6$ gives at  least charge $\alpha$ to each $5$-frame of its $2$-ball, except in Rule 2 where $F_6$ does not give anything to one $5$-frame $F'$ at distance~$1$ of $F_6$. However, $F'$ has a $4$-benefactor $7^+$-frame at distance~$1$, which, by Rule 3, gives charge $3\alpha$ to $F'$.

Let $F$ be a $5$-frame. By the previous paragraph, we can consider that $F$ receives at least charge $\alpha$ from each $6$-frame in its $2$-ball. By Rules $3$ and $7$, $F$ receives at least charge $2\alpha$ from each $7^+$-frame in its $2$-ball. Hence, if $F$ has a total charge excess of at least $3$ within its $2$-ball, with two $6^+$-frames, $F$ receives at least charge $3\alpha$ and we are done.
Otherwise, by Lemma~\ref{lem:five}, $F$ is either $1$-poor, $2$-poor or has a $6$-frame at distance~$1$ and another one in its $2$-ball.

If $F$ is $1$-poor, by Lemma~\ref{lem:1poor}, it has a $1$-poor-benefactor in its $2$-ball which, by Rule 4, gives charge $3\alpha$ to $F$.

If $F$ is $2$-poor, by Lemma~\ref{lem:2poor}, it has a $2$-poor-benefactor in its $2$-ball which, by Rule 5, gives charge $2\alpha$ to $F$. Moreover, by Lemma~\ref{lem:2poor}, $F$ has another $6^+$-frame in its $2$-ball which gives charge $\alpha$ to $F$.

Finally, suppose $F$ has a $6$-frame $F_6$ at distance~$1$ and another one, $F'_6$, in its $2$-ball.
If $F$ receives charge $2\alpha$ from $F_6$, we are done. Otherwise, by our discharging rules $F_6$ is necessarily a $4$-benefactor. Without loss of generality, we can assume that $F_6$ is oriented as in Figure~\ref{fig:4benef}. Then $F_6$ gives always charge $2\alpha$ to the $5$-frames in $\Z$-positions, except in Rule $1a$ where $F_6+(0,1)$ receives only charge $\alpha$, but in this case, $F_6+(0,1)$ has three $6^+$-frames in its $2$-ball, so $F\neq F_6+(0,1)$.
Frame $F_6+(-1,0)$ has three $6^+$-frames in its $2$-ball, so $F\neq F_6+(-1,0)$.
Thus, we can assume that $F$ is on a $\Y$-position. Hence, $F'_6$ is also a $4$-benefactor.
In this case, by Rules 1 and 2, both $F_6$ and $F'_6$ give charge $\tfrac{3\alpha}{2}$ to $F$, and we are done.  
\end{proof}

\begin{lemma}\label{lem:d4}
After the application of the discharging rules, each $4$-frame has charge at least $5+3\alpha $.
\end{lemma}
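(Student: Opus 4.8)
The plan is to track only the charge that a $4$-frame \emph{receives}, since a $4$-frame never appears as a sender in any of the discharging rules. By Lemma~\ref{lem:fourcorners} a $4$-frame $F$ has initial charge $4$, so to reach the target I need $F$ to collect at least $1+3\alpha=\tfrac{40}{37}$ of charge. I would therefore aim to identify exactly which frames send charge to $F$ and how much.

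The source of this charge is the four axis-neighbours of $F$. By Lemma~\ref{lem:four}, each of the frames $F+(2,0)$, $F+(-2,0)$, $F+(0,2)$, $F+(0,-2)$ is a $6^+$-frame. Since the displacement set $\{(0,2),(0,-2),(2,0),(-2,0)\}$ is symmetric under negation, $F$ lies in one of the four axis-positions of each of these neighbours, so each of them is a $4$-benefactor having $F$ as one of its adjacent $4$-frames; conversely these are the only $4$-benefactors of $F$. The key observation I would use is that every rule governing how a $4$-benefactor discharges onto an adjacent $4$-frame sends the \emph{same} amount $\tfrac{3\alpha+1}{4}$: Rule~1 and Rule~2 when the benefactor is a $6$-frame (where by Lemma~\ref{lem:4benef} the adjacent $4$-frame is unique), and Rule~3 when the benefactor is a $7^+$-frame (where it gives $\tfrac{3\alpha+1}{4}$ to each adjacent $4$-frame). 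In all three cases the charge arriving at $F$ from a single neighbour is $\tfrac{3\alpha+1}{4}$.

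Summing over the four axis-neighbours, $F$ receives $4\cdot\tfrac{3\alpha+1}{4}=1+3\alpha$, and since no rule removes charge from a $4$-frame, its final charge is $4+(1+3\alpha)=5+3\alpha$, as required. I do not expect a genuine obstacle here: the only points to verify are that the three $4$-benefactor rules all assign the identical value $\tfrac{3\alpha+1}{4}$ to each neighbouring $4$-frame (immediate from the statements of Rules~1--3) and that a $4$-frame is never treated as a $5$-frame by any rule, so that it neither loses charge nor double-counts incoming charge. This makes Lemma~\ref{lem:d4} the easiest of the four discharging verifications, essentially a direct bookkeeping consequence of Lemma~\ref{lem:four} and the rule definitions.
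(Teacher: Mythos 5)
Your proof is correct and follows essentially the same route as the paper's: invoke Lemma~\ref{lem:four} to get that the four axis-neighbours of a $4$-frame are $4$-benefactors, note that Rules~1, 2 and~3 each send exactly $\tfrac{3\alpha+1}{4}$ to an adjacent $4$-frame, and sum to $1+3\alpha$. Your additional checks (that $F$ sits in an axis-position of each neighbour, and that no rule ever takes charge away from a $4$-frame) are points the paper leaves implicit, so nothing is missing.
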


\begin{proof}
By Lemma~\ref{lem:four}, a $4$-frame $F$ has four $4$-benefactors. By Rules $1$, $2$ and $3$,  each $4$-benefactor gives charge $\tfrac{3\alpha+1}{4}$ to $F$. Hence, $F$ receives charge $1+3\alpha $ and ends with charge $5+3\alpha $.
\end{proof}

After the application of our discharging rules, by Lemmas~\ref{lem:d7},~\ref{lem:d6},~\ref{lem:d5} and~\ref{lem:d4}, each frame has charge at least $5+3\alpha=\frac{188}{37}$, therefore, the average number of codewords in each frame is at least $\frac{188}{37}$ (this is due to the fact that each frame gives charge to vertices at distance at most 2). There are twelve vertices in each frame, hence, by Proposition~\ref{prop:dens}, we obtain Theorem \ref{thm:main}.

\end{document}